\documentclass[11pt]{amsart}

\usepackage[pdftex]{graphicx}
\usepackage[ddmmyyyy]{datetime}
\usepackage{amssymb,amsfonts,amsmath}
\usepackage{amsthm,xypic,enumerate}
\usepackage{color,tikz,pgf}
\usepackage{fancyhdr}
\usetikzlibrary{shapes}

\input xy
\xyoption{all}
\usepackage{hyperref}

\newcommand{\N}{\mathbb{N}}
\newcommand{\R}{\mathbb{R}}
\newcommand{\mms}{\mathcal{S}}
\newcommand{\mmS}{\mathcal{S}}
\renewcommand{\mms}{\mathcal{S}}
\newcommand{\mmC}{\mathcal{C}}
\newcommand{\mmN}{\mathcal{N}}
\newcommand{\mmR}{\mathcal{R}}
\newcommand{\omR}{\overline{\R}}
\newcommand{\omN}{\overline{\N}}
\newcommand{\bn}{\mathbf{1}}
\newcommand{\on}{\mathcal{O}(\mmN)}

\newcommand{\mmY}{\mathcal{Y}}

\DeclareMathOperator*{\im}{im}
\define\ta{\tilde{a}}
\define\tb{\tilde{b}}

\newenvironment{enumerate*}[1][{}]{\begin{itemize}}{\end{itemize}}

\numberwithin{equation}{section}
\newtheorem{theorem}[equation]{Theorem}
\newtheorem{proposition}[equation]{Proposition}
\newtheorem{corollary}[equation]{Corollary}
\newtheorem{lemma}[equation]{Lemma}

\theoremstyle{definition}
\newtheorem{definition}[equation]{Definition}
\newtheorem{remark}[equation]{Remark}
\newtheorem{example}[equation]{Example}

\textwidth=430pt \hoffset = -1.3truecm

\title[Preclusion of switch behavior in  reaction networks]{Preclusion of switch behavior in  reaction networks with mass-action kinetics}

\author{Elisenda Feliu, Carsten Wiuf}

\thanks{{\it Authors affiliation}: Bioinformatics Research Centre, Aarhus University, C. F. M\o llers All\'{e} 8, DK-8000 Aarhus, Denmark}
\thanks{{\it Corresponding author}: Elisenda Feliu, efeliu@birc.au.dk}

\date{\today}

\begin{document}

\maketitle

\begin{abstract}
We provide a Jacobian criterion that applies to arbitrary chemical reaction networks taken with mass-action kinetics to preclude the existence of multiple positive steady states within any stoichiometric class for any choice of rate constants.   
We are concerned with the characterization of injective networks, that is, networks for which  the species formation rate function is injective  in the interior of the positive orthant within each stoichiometric class. We show that a network is injective if and only if the determinant of the Jacobian of a certain function does not vanish. The function consists of   components of the species formation rate function and a maximal set of independent conservation laws.
The determinant of the function is a polynomial in the species concentrations and the rate constants (linear in the latter) and its coefficients are fully determined. 
The criterion also precludes  the existence of degenerate steady states. Further, we relate injectivity of a chemical reaction network to that of the chemical reaction network obtained by adding outflow, or degradation, reactions for all species.

\emph{Keywords: Jacobian criterion, multiple steady states, injectivity, stoichiometric space, degenerate steady state} 

\end{abstract}

\section{Introduction}
Multistationarity in cellular systems provides  a mechanism for switching between different cellular responses and can be crucial for cellular decision making.  Even though different features, such as feedback loops, are known that facilitate multistationarity in systems, it is in general difficult to decide whether a particular system has  the capacity to exhibit multiple steady states. Typical systems are high-dimensional and contain many  parameters that are unknown or poorly determined.  In order to determine the steady states of such a system,  the simultaneous solutions to a large set of equations taken together with the unknown parameters is required. In general, this is an impractical task. Various criteria have therefore been developed to preclude the existence of multiple (positive) steady states.  These criteria typically utilize the structure or qualitative features of the system \cite{feinbergnotes,craciun-feinbergII,banaji-craciun1} or properties of the class of kinetics that are allowed \cite{craciun-feinbergI,craciun-feinberg-semiopen,Banaji-donnell}. 

It is the aim of this paper to introduce a criterion for a chemical reaction network taken with mass-action kinetics to preclude the existence of multiple positive steady states within any stoichiometry class for any choice of rate constants. The criterion is based on the  species formation rate function and characterizes when this function is injective  for positive concentration vectors within each stoichiometric class. If this is the case then the network is said to be injective (Definition 5.1) and there cannot exist  multiple positive steady states within a stoichiometric class.

We provide a Jacobian criterion that characterizes injectivity for any network.  The criteria is computationally tractable and  extends  the Jacobian criterion for \emph{fully open} networks in \cite{craciun-feinbergI}. In a fully open network all  chemical species are assumed to flow out of the system or, alternatively,  all  species are being  degraded. If the dimension of the stoichiometric space is less than the number of species then
 the Jacobian of the species formation rate function is always singular and the criterion for fully open networks does not apply.
We replace the species rate formation function by  a new function (Definition 4.4) obtained by replacing some components of the species formation rate function by equations for independent conservation laws.  We show that a network is injective if and only if the determinant of the Jacobian of the modified function does not vanish (Corollary 5.4). If this is the case then any positive steady state is \emph{non-degenerate} (Corollary 5.5).
The criterion depends only on the structure of the network and, therefore, is easy to implement using any computational algebra software.

Any network can be seen as a subnetwork of a fully open network by adding outflow reactions. 
We show that the determinant of the Jacobian related to  the original network can be recovered from that of the associated  fully open network. Further, we show that if the fully open network is injective then either the original network is injective as well and all positive steady states are non-degenerate, or all steady states of the original network are degenerate (Theorem 9.1).  Further, the cases for which the latter occurs are characterized (Corollary 8.1). 

Our work builds on  previous  work by Craciun and Feinberg \cite{craciun-feinbergI} on injectivity of networks in the context of a \emph{continuous flow stirred tank reactor} (here called fully open networks). 
In two subsequent papers  Craciun and Feinberg relate these results  to arbitrary networks.
First, in \cite{craciun-feinberg} they show that if a fully open network    does not have the capacity to admit multiple positive steady states, then any network resulting from removing outflow reactions cannot have multiple non-degenerate positive steady states within any stoichiometric class.
Later, in \cite{craciun-feinberg-semiopen}, they provide a (sufficient) condition to ensure that in an arbitrary network degenerate steady states cannot exist assuming that the fully open network is injective. 
Therefore, the combination of the two results gives a criterion to preclude multiple  positive  steady states. It consists of first deciding whether the associated fully open network is injective and then determining if degenerate steady states can occur.
However, a network can be injective even if the associated fully open network is not.

 The work presented here  
 provides a direct path to preclude multiple steady states by avoiding the detour to fully open networks. We show that injectivity of an arbitrary network can be assessed regardless of the injectivity of the associated fully open network.  Further,  if the associated fully open network is injective, the occurrence of degenerate steady states in a  network is completely characterized.

A different route to injectivity of a fully open network was taken by Banaji et al. in \cite{Banaji-donnell}. A criterion is given that ensures that minus  the Jacobian of the species formation rate function is a $P$-matrix (the definition is given in \S\ref{sec10}). It then follows from the results of \cite{Gale:1965p474} that the network is injective. Our results imply that, after changing the sign of certain rows, the Jacobian of the modified species formation rate function is a $P$-matrix and it follows that the network is injective as well. Using this approach and the notion of strongly sign-determined matrices, Banaji et al.  extend in \cite{Banaji-donnell}  the injectivity results of Craciun and Feinberg for fully open networks taken with mass-action kinetics to kinetics satisfying some mild conditions (see also \cite{banaji-craciun1}). 
Our work is currently restricted to  mass-action kinetics  and the extension to general kinetics is currently being investigated.

The outline of the paper is as follows. In \S\ref{CRN} we introduce some notation and the main definitions relating to networks  and  mass-action kinetics. In \S\ref{sec3} we introduce the stoichiometric classes and the distinction between fully open and closed networks. We proceed in \S\ref{sec:degenerate} to study degenerate steady states. Injectivity of networks is discussed in \S\ref{sec5}, where the definition of injectivity and the Jacobian criterion are introduced. In \S\ref{sec6} and \S\ref{closed} we focus on open and closed networks, respectively. Section~\ref{sec8} provides a characterization of networks with only degenerate steady states. Finally, in \S\ref{sec9} we relate injectivity of open networks to that of closed networks  and in \S\ref{sec10} the relationship between  $P$-matrices and injectivity is discussed. We end with a few remarks including  a summary  (Figure \ref{summary}) of how our work relates to the previous work of Craciun and Feinberg.

\section{Chemical reaction networks with mass-action kinetics}\label{CRN}

\subsection{Notation}
Let $\R_{+}$ denote the set of positive real numbers (without zero) and $\omR_{+}$  the set of non-negative real numbers (with zero). Similarly, let $\omN$ be the set of non-negative integers. Given a finite set $\mathcal{E}$, let $\omN^{\mathcal{E}}$ be the semi-ring of formal sums $v=\sum_{E\in \mathcal{E}} \lambda_E E$, with $\lambda_E\in \omN$. If $\lambda_E\in \N$   for all $E\in \mathcal{E}$,  then we write $v\in \N^{\mathcal{E}}$. The semi-rings $\omR^{\mathcal{E}}_+$ and $\R^{\mathcal{E}}_+$ are defined analogously.

The ring of polynomials in $\mathcal{E}$ is denoted $\R[\mathcal{E}]$. 
The total degree of a monomial $\prod_{E\in \mathcal{E}} E^{n_E}$, with $n_E$ a non-negative integer for all $E$, is the sum of the degrees of  the  variables, $\sum_{E\in \mathcal{E}} n_E$. The degree of a polynomial is   the maximum of the total degrees of its monomials. 

If a polynomial $p$ vanishes for all assignments  $a\colon\mathcal{E}\rightarrow \R_+$ then $p=0$ identically. Further, if $p$ is a non-zero polynomial  in $\R[\mathcal{E}]$ such that the degree of each variable in each monomial is either $1$ or zero, then all the coefficients of $p$ are  non-negative if and only if $p(a(\mathcal{E}))>0$ for any assignment  $a\colon\mathcal{E}\rightarrow \R_+$.

\subsection{Chemical reaction networks}
Here we introduce  the definition of a chemical reaction network and some related concepts. See for instance \cite{feinbergnotes,Feinbergss} for background and extended discussions.  

\begin{definition}\label{crn}
A \emph{chemical reaction network} (or simply \emph{network})
consists of three finite sets:
\begin{enumerate}[(1)]
\item A set $\mmS$ of \emph{species}.
\item A set $\mmC\subset \omN^{\mmS}$ of \emph{complexes}.
\item A set  $\mmR\subset \mmC\times \mmC$ of \emph{reactions}, such that $(y,y)\notin \mmR$ for all $y\in \mmC$, and if $y\in \mmC$, then there exists $y'\in \mmC$ such that either $(y,y')\in \mmR$ or $(y',y)\in \mmR$. 
\end{enumerate}
\end{definition}
Following the usual convention, an element $r=(y,y')\in \mmR$ is  denoted  by $r\colon y\rightarrow y'$.
The \emph{reactant} and \emph{product} (complexes) of a reaction $r\colon y\rightarrow y'$  are $y$ and $y'$, respectively. By definition,   any complex is either the reactant or product  of some reaction.  
The zero complex $0\in \mmC$ is allowed by definition. Reactions of the form  $y\rightarrow0$ are called \emph{outflow} reactions and reactions of the form $0\rightarrow  y$ are called \emph{inflow} reactions  \cite{feinberg-horn-open}.  In particular, \emph{species inflow} and \emph{species outflow} reactions are reactions of the form $0\rightarrow S$ and $S\rightarrow 0$, respectively, for some $S\in \mmS$.
 
Let $n$ be the cardinality of $\mmS$. We fix an order in $\mmS$ so that $\mmS=\{S_1,\dots,S_n\}$ and  identify $\omN^{\mmS}$  with $\omN^n$. The species $S_i$ is identified with the $i$-th canonical $n$-tuple of $\omN^n$ with $1$ in the $i$-th position and zeroes elsewhere. Accordingly, a complex $y\in \mmC$ is given as $y=\sum_{i=1}^n y_i S_i$ or  $(y_1,\dots,y_n)$. Although $\omN^n$ is not a vector space, $n$-tuples $v\in \omN^n$ will be called \emph{vectors}.

\begin{example}\label{futile}
Consider the network with set of species $\mmS=\{S_1,S_2,S_3,S_4,S_5,S_6\}$, set of complexes $\{S_{1} + S_3,S_1+S_4,S_2+S_4,S_{2} + S_3,S_5,S_6 \}$ and reactions 

\vspace{0.1cm}
 \centerline{\xymatrix{
S_{1} + S_3 \ar@<0.3ex>[r]  & S_5  \ar@<0.3ex>[l]  \ar[r] & S_{1} + S_4  &
S_{2} + S_4 \ar@<0.3ex>[r]  & S_6  \ar@<0.3ex>[l]  \ar[r]  & S_{2} + S_3.
}}

\vspace{0.1cm}
\noindent  That is, the reactions are $r_1\colon  S_1+S_3\rightarrow S_5$, $r_2\colon  S_5\rightarrow S_1+S_3$, $r_3\colon S_5\rightarrow S_1+S_4$, $r_4\colon  S_2+S_4\rightarrow S_6$, $r_5\colon  S_6\rightarrow S_2+S_4$ and $r_6\colon S_6\rightarrow S_2+S_3$. This network is a main building block in protein modification systems and is  known as the \emph{futile cycle}. It assumes  the Michaelis-Menten enzyme mechanism in which a substrate $S_3$ is modified into a substrate $S_4$ through the  formation of an intermediate complex $S_5$. The reaction is catalyzed by an enzyme $S_1$. The modification can be reversed via a similar set of reactions with an intermediate complex $S_6$ and an enzyme $S_2$.  Each \emph{reversible} reaction is written as two \emph{irreversible} reactions, e.g.~the reactions $r_1$ and $r_2$ are considered  two distinct reactions and not one reversible reaction.
\end{example}

\subsection{Mass-action kinetics}
Let $\mathcal{N}=(\mmS,\mmC,\mmR)$ be a  network. We denote the molar concentration  of species $S_i$ at time $t$ by $c_i=c_i(t)$ and associate with any complex $y=(y_1,\dots,y_n)$ the monomial $c^y=\prod_{i=1}^n  c_i^{y_i}\in \R[c_1,\dots,c_n]$. For example, if $y=(2,1,0,1)\in \omN^4$, then the associated monomial is $c^y=c_1^2c_2c_4$. 

A  \emph{rate vector} is an element $\kappa=(k_{y\rightarrow y'})_{y\rightarrow y'} \in \R_+^{\mmR}$ given by the assignment of a positive  \emph{rate constant} $k_{y\rightarrow y'}\in \R_+$ to each reaction $r\colon y\rightarrow y'\in \mmR$. The (mass-action) \emph{species formation rate function} corresponding to the rate vector $\kappa$ is defined by:
\begin{eqnarray*}\label{massact}
\R^{n} & \xrightarrow{f_{\kappa}} & \R^{n}, \qquad 
 c  \mapsto  \sum_{y\rightarrow y'\in \mmR} k_{y\rightarrow y'} c^y (y' -   y).
\end{eqnarray*} 
Let $f_{\kappa,i}(c)$ denote the $i$-th entry of  $f_{\kappa}(c)$, that is
$f_{\kappa,i}(c)= \sum_{y\rightarrow y'\in \mmR} k_{y\rightarrow y'} c^y (y'_i -   y_i)$.

The set of reactions together with a rate vector give rise to a polynomial system of ordinary differential equations (ODEs): 
\begin{align*} 
\dot{c} &=f_{\kappa}(c).
\end{align*}
These ODEs describe the dynamics of the concentrations $c_i$ in time.
The  \emph{steady states} of the network with rate vector $\kappa$ are the solutions to the system of polynomial  equations in $c_1,\dots,c_n$ obtained by setting the derivatives of the  concentrations to zero:
\begin{align*} 
0 =& f_{\kappa}(c). 
\end{align*}
This system of equations is referred to as the steady-state equations. We are interested in the  positive steady states, that is, the  solutions $c$ to the steady-state equations such that all concentrations are  positive, $c\in \R^n_+$. 

\begin{example}
The  ODEs system of the futile cycle taken with mass-action kinetics is:
\begin{align*}
\dot{c}_{1} &=  - k_1 c_{1}c_3+ (k_2+k_3)c_5   & \dot{c}_4 &=-k_4 c_{2}c_4 +k_3c_5+k_6c_6   \\  \dot{c}_{2} &= - k_4 c_{2}c_4 +(k_5+k_6)c_6  
& \dot{c_5} &=  k_1  c_{1}c_3-(k_2+k_3)c_5        \\ 
\dot{c}_3 &= - k_1c_{1}c_3+ k_2c_5+k_6c_6  & \dot{c_6} &=  k_4 c_2c_4-(k_5+ k_6)c_6
\end{align*}
where   the rate constant of reaction $r_i$ is denoted by $k_i$.
\end{example}

\begin{remark} If $\kappa\in \R^{\mmR}_+$ and/or $c\in \R^n_+$ are not fixed then the function $f_{\kappa}(c)$ can be seen as a polynomial function taking values  in  $\R[c]$, $\R[\kappa]$ or $\R[c\cup \kappa]$. 
\end{remark}

\section{Stoichiometrically compatible steady states}\label{sec3}
The dynamics of a network might preserve quantities that remain constant  over time. If this is the case, the dynamics  takes place in a proper invariant subspace of $\R^{n}$, fixed by the initial concentrations $c_i(0)$ of the system.  Let $v\cdot v'$ denote the Euclidian scalar product of two vectors $v,v'$. Let $v^t$ denote the transpose of a vector $v$.

\begin{definition}
The \emph{stoichiometric subspace}  of a network $\mmN=(\mmS,\mmC,\mmR)$ is the following vector subspace of $\R^n$: 
$$\Gamma = \langle y'-y|\, y\rightarrow y' \in \mmR\rangle.   $$
Two  vectors   $c,c'\in \R^n$   are called \emph{stoichiometrically compatible} if $c-c'\in \Gamma$, or equivalently,  if $\omega\cdot c = \omega \cdot c'$  for all $\omega\in \Gamma^{\perp}$. 
\end{definition}

We denote by $s$ the dimension of $\Gamma$. Note that $\Gamma$ is independent of the choice of rate constants and depends only on the structure of the reactions. Being stoichiometrically compatible is an equivalence relation which partitions $\overline{\R}^n_+$ into classes, called \emph{stoichiometric classes}. In particular, 
the \emph{stoichiometric class} of a concentration vector $c\in \omR^n_{+}$ is $\{c+\Gamma\}\cap\omR_{+}^n$. 
 
 For any rate vector $\kappa\in \R^{\mmR}_+$, the image of $f_{\kappa}$ is contained in $\Gamma$. Thus, for any choice of rate vector $\kappa$, $c\in \R^n$, and $\omega\in \Gamma^{\perp}$, we have that $\omega\cdot f_{\kappa}(c)=0$  and thus  $\omega\cdot \dot{c}=0$.
If $\omega=(\lambda_1,\dots,\lambda_n)$ then  $\sum_{i=1}^n \lambda_i c_i$ is independent of time and  determined by the initial concentrations of the system. These conserved quantities are generally referred to as \emph{total amounts}. Since  $a-b\in \Gamma$ if and only if $\omega\cdot a = \omega \cdot b$  for all $\omega\in \Gamma^{\perp}$,  total amounts are conserved within each stoichiometric class and characterize the class.

\begin{example}
The stoichiometric space of the futile cycle (Example~\ref{futile}) is
\begin{align*}
\Gamma & =\langle  S_1+S_3-S_5, S_5-S_1-S_4,  S_2+S_4- S_6,S_6-S_2-S_3 \rangle \\ & = \langle  S_1+S_3-S_5, S_5-S_1-S_4,  S_2+S_4- S_6  \rangle
\end{align*}
and has dimension $s=3$. The dimension of the orthogonal space $\Gamma^{\perp}$ is $d=n-s=3$ as well and a basis is
$\Gamma^{\perp} =\langle  \omega^1,\omega^2,\omega^3\rangle$ with 
\begin{align}\label{perpbasis}
\omega^1 &= S_1+S_5, & \omega^2 &= S_2+S_6, & \omega^3 &= S_3+S_4+S_5+S_6.
\end{align}
Indeed, we have $\dot{c}_1+\dot{c}_5=\dot{c}_2+\dot{c}_6=\dot{c}_3+\dot{c}_4+\dot{c}_5+\dot{c}_6=0$.  In this example, three total amounts determine each stoichiometric class.
\end{example}

\begin{remark} Questions like ``How many steady states does a system possess?'' refer to the number of steady states  within each stoichiometric class.  If this restriction is not imposed and $s=\dim(\Gamma)<n$, then the steady states describe an algebraic variety of dimension at least one over the complex numbers.
\end{remark}

\begin{definition}
A network $\mmN=(\mmS,\mmC,\mmR)$ has the \emph{capacity for multiple positive steady states} if there exists a rate vector $\kappa\in \R_+^{\mmR}$ and distinct vectors  $a,b\in \R^n_+$ such that $a-b\in \Gamma$ and  $f_{\kappa}(a)=f_{\kappa}(b)=0$.
\end{definition}

Note that in this work we focus mainly on the existence of multiple \emph{positive} steady states. However, as we will show, the methods developed can preclude the existence of a specific type of  multiple steady states on the boundary of $\R_+^n$.

If a basis $\{\omega^1,\dots,\omega^d\}$ of $\Gamma^{\perp}$ is chosen, then a network $\mmN$  has the capacity for multiple positive steady states if there exists a rate vector $\kappa\in \R_+^{\mmR}$ and distinct  $a,b\in \R^n_+$ such that $f_{\kappa}(a)=f_{\kappa}(b)=0$ and $\omega^i\cdot a = \omega^i\cdot b$ for all $i=1,\dots,d$. In particular, if  the map $\overline{f}_{\kappa}:\R^n\rightarrow \R^{d+n}$ defined by $\overline{f}_{\kappa}(c)=(\omega^1\cdot c,\dots,\omega^d\cdot c,f_{\kappa,1}(c),\dots,f_{\kappa,n}(c))$ is injective, then the network does not have the capacity for multiple positive steady states. This function is the focus of study in this paper.

If  species in- or outflow reactions  exist  for all species in  a network then the stoichiometric space has maximal dimension $n$. Therefore,  the requirement $a-b\in \Gamma$ in the previous definition  is superfluous. 
Specifically, if there is a reaction $S\rightarrow 0$ or $0\rightarrow S$ for some species $S$ then there are no vectors in $\Gamma^{\perp}$ with non-zero $i$-th entry.

\begin{lemma}\label{nocons} Let $\mathcal{N}=(\mmS,\mmC,\mmR)$ be a network.
 If   $S_i\rightarrow 0\in \mmR$ or $0\rightarrow S_i\in \mmR$ for some $S_i\in \mmS$, then $\lambda_i=0$ for all $\omega=(\lambda_1,\dots,\lambda_n) \in \Gamma^{\perp}$. 
\end{lemma}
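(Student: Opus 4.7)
My approach is to work directly from the definition of $\Gamma$ as the span of the reaction vectors $y' - y$ for $y \rightarrow y' \in \mmR$. Under the identification of the species $S_i$ with the $i$-th canonical basis vector $e_i \in \omN^n$ and of the zero complex $0 \in \mmC$ with the zero vector, an outflow reaction $S_i \rightarrow 0$ contributes the reaction vector $0 - e_i = -e_i$ to $\Gamma$, while an inflow reaction $0 \rightarrow S_i$ contributes $e_i$. In either case $e_i \in \Gamma$, since $\Gamma$ is a linear subspace.

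Given this, for any $\omega = (\lambda_1, \ldots, \lambda_n) \in \Gamma^\perp$, orthogonality to $e_i$ immediately forces $\omega \cdot e_i = \lambda_i = 0$, which is the desired conclusion. That is really the entire argument: the lemma is essentially just a translation of the definitions, with the only small step being the observation that the canonical vector $e_i$ appears (up to sign) as a reaction vector precisely when $S_i$ has an inflow or outflow reaction.

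I do not foresee any real obstacle. The point of stating the lemma is its consequence rather than its difficulty: in a fully open network, where every species $S_i$ carries at least an outflow reaction $S_i \rightarrow 0$, the lemma implies $\Gamma^\perp = 0$ and hence $\Gamma = \R^n$, so no nontrivial conservation laws exist. This is precisely why the Jacobian criterion of Craciun--Feinberg applies unconditionally in the fully open setting and needs to be modified (as done later in the paper via the map $\overline{f}_\kappa$) for general networks in which $\Gamma^\perp$ may be nonzero.
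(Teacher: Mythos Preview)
Your proof is correct and essentially identical to the paper's own argument: both observe that the hypothesis forces $\pm S_i \in \Gamma$, and then orthogonality of $\omega$ to $S_i$ gives $\lambda_i = 0$. The additional commentary you give about the consequence for fully open networks is accurate and matches the paper's intent in stating the lemma.
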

\begin{proof}
Since $\pm S_i\in \Gamma$, we have $0=\omega\cdot (\pm S_i)= \pm \lambda_i$.
\end{proof}

\begin{definition}\label{openclose}
Let $\mathcal{N}=(\mmS,\mmC,\mmR)$ be a network. We say that
\begin{enumerate}[(i)]
\item  $\mathcal{N}$ is \emph{open}  if $\Gamma= \R^n$.
\item $\mathcal{N}$ is \emph{fully open} if the outflow reaction $S_i\rightarrow 0$ belongs to $\mmR$ for all $S_i\in \mmS$.
\item $\mathcal{N}$ is \emph{closed} if $\Gamma\neq \R^n$.
\end{enumerate}
If $\mmN$ is any  network then the \emph{associated fully open network}, $\mmN^o=(\mmS,\mmC^o,\mmR^o)$, is the network with   $\mmC^o=\mmC\cup\mmS\cup \{0\}$ and $\mmR^o=\mmR\cup \{S_i\rightarrow 0|\, i=1,\dots,n\}$.
\end{definition}

Lemma \ref{nocons} ensures that a fully open network is also open. Fully open networks are  considered by Craciun and Feinberg in \cite{craciun-feinbergI} in the context of continuous  flow stirred tank reactors and  their results extend to arbitrary open networks. A closed network is allowed to have outflow reactions as well, but not for all species since $\Gamma\neq \R^n$. 

\begin{remark} A closed network could also be made open by adding species inflow reactions to the set of reactions, but to be fully open requires species outflow reactions.
\end{remark}

\medskip
\textbf{Notation.}
Let $\mathcal{N}=(\mmS,\mmC,\mmR)$ be a network and $\kappa$ a rate vector. At this point we have defined the species formation rate function $f_{\kappa}$, the stoichiometric space $\Gamma$, and used $n$ for the number of species, $s$ for the dimension of $\Gamma$ and $d=n-s$ for the dimension of $\Gamma^{\perp}$. 
None of these objects incorporate reference to the specific network in the notation. This notation is used without further mentioning  throughout the paper.
Additionally, to ease notation, in some examples species are called $A,B,\dots$ and the respective concentrations $c_A,c_B,\dots$.

\section{Degenerate steady states}\label{sec:degenerate}

For any function $f=(f_1,\dots,f_p)\colon\R^m\rightarrow \R^p$ let $J_c(f)$ denote the Jacobian of $f$ at $c$, that is, the $p\times m$ matrix  with entry $(i,j)$ being $\partial f_i(c)/\partial c_j$.

\begin{definition}\label{def:deg} Let $\mathcal{N}=(\mmS,\mmC,\mmR)$  be a network and   $\kappa\in \R_+^{\mmR}$ a rate vector. A steady state $c^*\in \R^n$ of $\mathcal{N}$  is \emph{degenerate} if $\ker(J_{c^*}(f_{\kappa}))\cap \Gamma \neq \{0\}$.
\end{definition}

It is proven in  \cite[$\S$5]{Feinbergss} that for any $\kappa\in \R_+^{\mmR}$, $c\in \R_+^n$, and $\gamma\in \R^n$, 
\begin{equation}\label{jacform}
J_c(f_{\kappa})(\gamma)  = \sum_{y\rightarrow y'\in \mmR} k_{y\rightarrow y'} c^y(y*_c \gamma) (y'-y),\quad \textrm{where}\quad v*_cw=\sum_{i=1}^n\frac{v_iw_i}{c_i}.
\end{equation}

It is our aim to understand $\ker(J_{c^*}(f_{\kappa}))\cap \Gamma$. We find a criterion to determine whether this intersection consists of the zero vector or is a proper subspace. The criterion does not require the computation of $\ker(J_{c^*}(f_{\kappa}))$. In particular, we find that the existence of degenerate steady states is   linked to the function $\overline{f}_{\kappa}$ being injective. 

 Since vectors of $\Gamma$ are characterized by being orthogonal to all vectors in $\Gamma^{\perp}$, we obtain the following proposition (stated here for a general vector subspace $F$).

\begin{proposition}\label{degenerate} Let $\mathcal{N}$ be a network and $\kappa\in \R_+^{\mmR}$ a rate vector.   Let $F$ be a vector subspace of $\R^n$, $\{\omega^1,\dots,\omega^m\}$ a basis of $F^{\perp}$ and  define $\overline{f}_{\kappa}\colon \R^n\rightarrow \R^{m+n}$ by $$\overline{f}_{\kappa}(c)=(\omega^1\cdot c,\ldots, \omega^m\cdot c,f_{\kappa,1}(c),\dots,f_{\kappa,n}(c)).$$ Fix $c^*\in \R^n$. Then, 
$\ker(J_{c^*}(f_{\kappa}))\cap F= \{0\}$  if and only if the Jacobian $J_{c^*}(\overline{f}_{\kappa})$ of  $\overline{f}_{\kappa}$ at $c^*$  has maximal rank $n$.
\end{proposition}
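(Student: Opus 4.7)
The plan is to unpack the definition of $J_{c^*}(\overline{f}_\kappa)$ as a block matrix and reduce the statement to a direct linear-algebra equivalence.

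Concretely, I would first observe that $\overline{f}_\kappa$ has $m+n$ components, the first $m$ of which are the linear forms $c \mapsto \omega^i \cdot c$ and the last $n$ of which are the $f_{\kappa,i}$. Hence the $(m+n)\times n$ Jacobian at $c^*$ is the block matrix
\[
J_{c^*}(\overline{f}_\kappa) = \begin{pmatrix} \Omega \\ J_{c^*}(f_\kappa) \end{pmatrix},
\]
where $\Omega$ is the $m\times n$ matrix whose $i$-th row is $(\omega^i)^t$. Importantly, $\Omega$ does not depend on $c^*$.

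Next, since $J_{c^*}(\overline{f}_\kappa)$ has $n$ columns, it has maximal rank $n$ if and only if its kernel is trivial. So I would translate the statement into the equivalence
\[
\ker J_{c^*}(\overline{f}_\kappa) = \{0\} \iff \ker J_{c^*}(f_\kappa)\cap F = \{0\}.
\]
To establish this, I would compute the kernel explicitly: a vector $v\in \R^n$ lies in $\ker J_{c^*}(\overline{f}_\kappa)$ iff $\Omega v = 0$ and $J_{c^*}(f_\kappa) v = 0$. The first condition says $\omega^i \cdot v = 0$ for all $i=1,\dots,m$, and since $\{\omega^1,\dots,\omega^m\}$ is a basis of $F^\perp$, this is equivalent to $v \perp F^\perp$, i.e., $v \in (F^\perp)^\perp = F$. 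Therefore
\[
\ker J_{c^*}(\overline{f}_\kappa) = F \cap \ker J_{c^*}(f_\kappa),
\]
and the claimed equivalence follows immediately.

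There is no real obstacle here; the only points worth being careful about are that the first $m$ rows of the Jacobian really are constant (so that orthogonality to $F^\perp$ is captured by those rows vanishing on $v$) and the standard identification $(F^\perp)^\perp = F$ in $\R^n$. Everything else is formal block-matrix manipulation.
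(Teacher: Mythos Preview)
Your proposal is correct and follows essentially the same argument as the paper: both identify the block structure of $J_{c^*}(\overline{f}_\kappa)$, reduce maximal rank to triviality of the kernel, and observe that the kernel equals $\ker J_{c^*}(f_\kappa)\cap F$ via the characterization $v\in F\iff \omega^i\cdot v=0$ for all $i$.
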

\begin{proof}
Let $J_{c^*}(\overline{f}_{\kappa})$ be the Jacobian of $\overline{f}_{\kappa}(c)$ at $c^*$. It is an $(m+n)\times n$ matrix. The rank of $J_{c^*}(\overline{f}_{\kappa})$ is maximal if and only if $\ker(J_{c^*}(\overline{f}_{\kappa}))= \{0\}$.
For $i=1,\dots,m$, the $i$-th row of  $J_{c^*}(\overline{f}_{\kappa})$  equals the vector $\omega^i$.  The lower $n\times n$ matrix of $J_{c^*}(\overline{f}_{\kappa})$ (obtained by removing the first $m$ rows) is equal to $J_{c^*}(f_{\kappa})$. A vector $v$ belongs to $F$ if and only if $\omega^i\cdot v=0$ for all $i$. It follows that $v\in \ker(J_{c^*}(\overline{f}_{\kappa}))$ if and only if $v\in \ker(J_{c^*}(f_{\kappa}))$ and $v\in F$. Thus, $\ker(J_{c^*}(f_{\kappa}))\cap F= \{0\}$ if and only if $\ker(J_{c^*}(\overline{f}_{\kappa}))=\{0\}$. \end{proof}

By  letting $F=\Gamma$ in the previous lemma,  we have shown that a steady state $c^*$ is non-degenerate if and only if $J_{c^*}(\overline{f}_{\kappa})$ has maximal rank $n$.  Since $J_{c^*}(\overline{f}_{\kappa})$ is a $(n+d)\times n$ matrix, $d$ rows of the matrix are linearly dependent of the remaining $n$. We describe now a procedure that is independent of $c$ and $\kappa$ to determine $d$ rows with this property. Thus, it can be used to determine the rank of $J_{c^*}(\overline{f}_{\kappa})$.

\begin{lemma}\label{omegajac} If $\omega\in \Gamma^{\perp}$ then
 $\omega^t J_c(f_{\kappa})=0$ for all $c\in \R^n$ and all rate vectors $\kappa\in \R^{\mmR}_+$.
\end{lemma}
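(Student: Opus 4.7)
The plan is to reduce the statement to a single observation already established in \S\ref{sec3}: for any rate vector $\kappa$, the image of $f_{\kappa}$ lies in the stoichiometric subspace $\Gamma$. Combined with $\omega\in\Gamma^{\perp}$, this forces the scalar function $c\mapsto \omega\cdot f_{\kappa}(c)$ to vanish identically on $\R^n$, and then the conclusion follows by differentiating.

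More concretely, I would first write $\omega=(\lambda_1,\dots,\lambda_n)$ and observe that the $j$-th entry of the row vector $\omega^t J_c(f_{\kappa})$ is
\[
(\omega^t J_c(f_{\kappa}))_j \;=\; \sum_{i=1}^n \lambda_i\,\frac{\partial f_{\kappa,i}}{\partial c_j}(c) \;=\; \frac{\partial}{\partial c_j}\Bigl(\sum_{i=1}^n \lambda_i f_{\kappa,i}(c)\Bigr) \;=\; \frac{\partial}{\partial c_j}\bigl(\omega\cdot f_{\kappa}(c)\bigr).
\]
Since $f_{\kappa}(c)\in \Gamma$ for every $c\in\R^n$ (as noted right after the definition of $\Gamma$) and $\omega\in\Gamma^{\perp}$, the function $\omega\cdot f_{\kappa}(c)$ is the zero function, so each partial derivative vanishes and $\omega^t J_c(f_{\kappa})=0$. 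There is no real obstacle here; the only thing to be careful about is interchanging the sum with the derivative, which is fine since $f_\kappa$ is a polynomial.

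As a sanity check I would also verify the identity directly from the explicit formula \eqref{jacform}: for any $\gamma\in \R^n$,
\[
\omega\cdot J_c(f_{\kappa})(\gamma) \;=\; \sum_{y\to y'\in \mmR} k_{y\to y'}\, c^y\,(y*_c\gamma)\,\bigl(\omega\cdot(y'-y)\bigr),
\]
which is zero term-by-term because each reaction vector $y'-y$ lies in $\Gamma$ and is hence orthogonal to $\omega$. This agrees with the first argument and makes it clear that the lemma is really just the infinitesimal version of the fact that conservation laws annihilate the dynamics.
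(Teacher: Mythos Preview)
Your proof is correct and is essentially identical to the paper's own argument: both observe that $\omega\cdot f_{\kappa}(c)$ vanishes identically and then differentiate componentwise. Your additional sanity check via \eqref{jacform} is a nice alternative, but the core reasoning matches the paper exactly.
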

\begin{proof}
If $\omega\in \Gamma^{\perp}$ then $\omega\cdot f_{\kappa}(c)=0$ for all $c$ and hence the scalar product vanishes as a polynomial in $c$. It follows that 
$0  =\frac{\partial(\omega\cdot f_{\kappa}(c))}{\partial c_i} = \omega\cdot \frac{\partial f_{\kappa}(c)}{\partial c_i}$ for all $i=1,\dots,n$ and $\omega^t J_c(f_{\kappa})=0$.
\end{proof}

Hence, each $\omega\in \Gamma^{\perp}$ provides a vanishing linear combination of the rows of $J_c(f_{\kappa})$.

\begin{definition}\label{extended}
A basis $\{\omega^1,\dots,\omega^d\}$  of  $\Gamma^{\perp}$ with $\omega^i=(\lambda_1^i,\dots,\lambda_n^i)$ is said to be \emph{reduced} if 
$\lambda_i^i=1$ for all $i$ and $\lambda^i_j=0$ for all $j=1,\dots,\widehat{j},\dots,d$. Given a rate vector $\kappa$, the \emph{associated extended rate function} $\widetilde{f}_{\kappa}:\R^n\rightarrow \R^n$ is the function defined by 
$$\widetilde{f}_{\kappa}(c)=(\omega^1\cdot c,\dots,\omega^d\cdot c,f_{\kappa,d+1}(c),\dots,f_{\kappa,n}(c)).$$
\end{definition}
After reordering  the species, such a basis of  $\Gamma^{\perp}$ always exist (use for instance Gaussian elimination on any basis of $\Gamma^{\perp}$). We assume from now on that a reduced basis is chosen, implying that the species are ordered accordingly. 

\begin{example} The basis $\omega^1,\omega^2,\omega^3$ of $\Gamma^{\perp}$ of the futile cycle given in \eqref{perpbasis} is reduced. The associated extended rate function is 
\begin{multline*}
\widetilde{f}_{\kappa}(c)=(c_1+c_5,c_2+c_6,c_3+c_4+c_5+c_6, -k_4 c_{2}c_4 +k_3c_5+k_6c_6, \\ k_1  c_{1}c_3-(k_2+k_3)c_5, k_4 c_2c_4-(k_5+ k_6)c_6). 
\end{multline*}
\end{example}

Let $J_{c,i}(f_{\kappa})$ denote the $i$-th row of  $J_{c}(f_{\kappa})$. If $\{\omega^1,\dots,\omega^d\}$ is a reduced basis with $\omega^i=(\lambda_1^i,\dots,\lambda_n^i)$, then it follows from Lemma \ref{omegajac}  that  $$0=(\omega^i)^t J_c(f_{\kappa})=  J_{c,i}(f_{\kappa})+ \sum_{j=d+1}^n  \lambda_j^iJ_{c,j}(f_{\kappa}),\qquad i=1,\dots,d.$$
Thus, the rows $1,\dots,d$ of   $J_c(f_{\kappa})$ are linear combinations of the rows $d+1,\dots,n$. It follows that the rank of $J_{c^*}(\overline{f}_{\kappa})$ equals the rank of $J_{c^*}(\widetilde{f}_{\kappa})$.  The latter  is a square $n\times n$ matrix and has maximal rank if and only if its determinant does not vanish.

\begin{corollary}\label{jaccomp} Let $\mathcal{N}$ be a network, $\kappa\in \R_+^{\mmR}$ a rate vector, 
 $\{\omega^1,\dots,\omega^d\}$ a reduced basis of  $\Gamma^{\perp}$ and $\widetilde{f}_{\kappa}(c)$  the associated extended rate function. If $c^*\in\R^n$, then $\ker(J_{c^*}(f_{\kappa}))\cap \Gamma = \{0\}$ if and only if $\det(J_{c^*}(\widetilde{f}_{\kappa}))\neq 0$.  
 In particular,  a steady state $c^*\in \R^n$ of $\mmN$ is degenerate if and only if $\det(J_{c^*}(\widetilde{f}_{\kappa}))=0$. 
\end{corollary}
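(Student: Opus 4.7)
The plan is to assemble three ingredients already in place: Proposition \ref{degenerate} with $F=\Gamma$, Lemma \ref{omegajac}, and the special form of a reduced basis. I would first invoke Proposition \ref{degenerate} in the case $F=\Gamma$ and $m=d$ (so that $\overline{f}_{\kappa}$ coincides with the function from the discussion of multiple steady states): this immediately rewrites the condition $\ker(J_{c^*}(f_{\kappa}))\cap\Gamma=\{0\}$ as the statement that the $(n+d)\times n$ Jacobian $J_{c^*}(\overline{f}_{\kappa})$ has maximal rank $n$.

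Next I would reduce this rank question to one about the square $n\times n$ matrix $J_{c^*}(\widetilde{f}_{\kappa})$. Writing the reduced basis as $\omega^i=(\lambda_1^i,\dots,\lambda_n^i)$ with $\lambda_i^i=1$ and $\lambda_j^i=0$ for $j\leq d$, $j\neq i$, Lemma \ref{omegajac} gives
\[
0=(\omega^i)^{t}J_{c^*}(f_{\kappa}) = J_{c^*,i}(f_{\kappa})+\sum_{j=d+1}^{n}\lambda_j^i\,J_{c^*,j}(f_{\kappa}),
\qquad i=1,\dots,d,
\]
so each of the first $d$ rows of $J_{c^*}(f_{\kappa})$ is a fixed linear combination of rows $d+1,\dots,n$. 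The matrix $J_{c^*}(\overline{f}_{\kappa})$ consists of the $d$ constant rows $\omega^1,\dots,\omega^d$ stacked on top of $J_{c^*}(f_{\kappa})$, while $J_{c^*}(\widetilde{f}_{\kappa})$ is obtained from it by deleting precisely the $d$ dependent rows $1,\dots,d$ of $J_{c^*}(f_{\kappa})$. Elementary row operations therefore show the two matrices have the same rank, and hence $J_{c^*}(\overline{f}_{\kappa})$ has rank $n$ if and only if the square matrix $J_{c^*}(\widetilde{f}_{\kappa})$ has rank $n$, equivalently $\det J_{c^*}(\widetilde{f}_{\kappa})\neq 0$. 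Combining with the first step gives the equivalence in the corollary, and the "in particular" clause is then just Definition \ref{def:deg}.

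There is essentially no obstacle here; the corollary is a bookkeeping consequence of the preceding two results, and the only point that requires a little care is verifying that the rows one removes are the correct ones — which is guaranteed by the reducedness of the chosen basis (so that row $i$, for $i\leq d$, of $J_{c^*}(f_{\kappa})$ appears with coefficient $\lambda_i^i=1$ in the linear relation provided by $\omega^i$, and can thus be solved for in terms of the bottom $n-d$ rows).
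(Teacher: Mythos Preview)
Your proposal is correct and follows essentially the same approach as the paper: apply Proposition~\ref{degenerate} with $F=\Gamma$, use Lemma~\ref{omegajac} together with the structure of a reduced basis to see that rows $1,\dots,d$ of $J_{c^*}(f_{\kappa})$ are redundant, and conclude that the rank of $J_{c^*}(\overline{f}_{\kappa})$ equals that of the square matrix $J_{c^*}(\widetilde{f}_{\kappa})$. The paper presents this reasoning in the paragraph immediately preceding the corollary rather than as a formal proof, but the logic is identical to yours.
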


\begin{example}\label{futile:jac}
 The Jacobian matrix $J_{c}(\widetilde{f}_{\kappa})$ of the futile cycle associated with  
the reduced basis of $\Gamma^{\perp}$ in \eqref{perpbasis} is
$$
J_{c}(\widetilde{f}_{\kappa}) = \left(\begin{array}{cccccc}
1 & 0 & 0 & 0 & 1 & 0 \\ 0 & 1 & 0 & 0 & 0 & 1 \\  0 & 0 & 1 & 1 & 1 & 1 \\  0 &   -k_4c_4 & 0 &   -k_4 c_{2} & k_3 & k_6 \\ k_1  c_3 & 0 & k_1 c_{1} & 0 & -k_2-k_3 & 0 \\  0 & k_4c_4 & 0 &  k_4c_2 & 0 &  -k_5- k_6 
\end{array}\right).
$$
The determinant of $J_{c}(\widetilde{f}_{\kappa})$  is 
\begin{multline}\label{ex:jac}
 -\det(J_{c}(\widetilde{f}_{\kappa})) =(c_1c_2+c_1c_4)k_1k_3k_4 + c_1 k_1 k_3 k_5 \\ + (c_1c_2+c_2c_3) k_1 k_4 k_5   + c_2 k_2 k_4 k_5 + c_2 k_3 k_4 k_5 + c_1 k_1 k_3 k_6.
\end{multline}
All  coefficients of $ -\det(J_{c}(\widetilde{f}_{\kappa}))$ as a polynomial in $c,\kappa$ are positive. Thus $\det(J_{c}(\widetilde{f}_{\kappa}))$ does not vanish for any $c\in \R^n_+$ and $\kappa\in \R^{\mmR}_+$ and all positive steady states of the futile cycle are non-degenerate.
In this example, $\det(J_{c}(\widetilde{f}_{\kappa}))$ is linear in the rate constants.
 If at least one coefficient in the polynomial was negative then we could find $c,\kappa$ for which $ \det(J_{c}(\widetilde{f}_{\kappa}))=0$, implying that a degenerate steady state could occur.   This  observation holds for any network and will be discussed in the following sections.
\end{example}

\begin{remark}\label{kinetics-stoich} The minimal space containing the image of $f_{\kappa}$ is the  \emph{kinetic subspace}:
\begin{equation} 
\Lambda_{\kappa} := \langle \im f_{\kappa} \rangle \subseteq \Gamma. 
\end{equation}
In general, the two spaces $\Lambda_k$ and $\Gamma$ might not agree for a fixed rate vector $\kappa$. 
If this is the case then $\ker(J_c(f_{\kappa}))\cap\Gamma\neq \{0\}$ for any $c\in \omR^n_+$:
as above, if  $\omega\in \Lambda_{\kappa} ^{\perp}$ then   $0=\omega\cdot f_{\kappa}=\omega\cdot \dot{c}$. Similarly to the proof of  Lemma \ref{omegajac} we have that $\omega^t J_c(f_{\kappa})=0$ and hence $\langle \im J_c(f_{\kappa}) \rangle\subseteq \Lambda_{\kappa}$.  If $\Lambda_{\kappa}\subsetneq \Gamma$ then $\langle \im J_c(f_{\kappa}) \rangle$  has  at most dimension $s-1$ and it follows that the dimension of $\ker(J_c(f_{\kappa}))\cap\Gamma$ is at least $1$. Thus, if  the stoichiometric and the kinetic spaces do not agree for some $\kappa$, then all steady states corresponding to $f_{\kappa}$ are degenerate.
\end{remark}

\begin{example}
Consider  the network with reactions $r_1\colon A\rightarrow B$ and $r_2\colon A\rightarrow C$. The species formation rate function $f_{\kappa}\colon\R^3\rightarrow \R^3$ is 
$$f_{\kappa}(c_A,c_B,c_C)=(-(k_1+k_2)c_A, k_1c_A,k_2c_A), $$ 
where $k_i$ denotes the rate constant of reaction $r_i$. The stoichiometric space has dimension $2$ and a basis of $\Gamma^{\perp}$  is $A+B+C$. However, $k_2B-k_1C\in \Lambda_{\kappa}^{\perp}\setminus \Gamma^{\perp}$ and thus for all $\kappa$, all steady states are degenerate.
\end{example}

\begin{example}
The stoichiometric and the kinetic spaces of the previous example never agree. For some networks, the two spaces do not agree only for certain rate vectors. Consider for example the network with reactions  
$r_1\colon A\rightarrow B$, $r_2\colon A\rightarrow C$ and $r_3\colon B+C\rightarrow 2A$. 
The species formation rate function $f_{\kappa}:\R^3\rightarrow \R^3$ is 
$$f_{\kappa}(c_A,c_B,c_C)=(-(k_1+k_2)c_A+2k_3c_Bc_C, k_1c_A-k_3c_Bc_C,k_2c_A-k_3c_Bc_C). $$ 
If we let $\omega=(\lambda_1,\lambda_2,\lambda_3)$ then the equation $\omega\cdot f_{\kappa}(c)=0$ for all $c$ gives $2\lambda_1=\lambda_2+\lambda_3$ and $(\lambda_2-\lambda_3)(k_1-k_2)=0$. If $k_1\neq k_2$, then 
$(1,1,1)$ is a basis of $\Gamma^{\perp}$ as well as $\Lambda_{\kappa}^{\perp}$  and the stoichiometric and the kinetic spaces agree. However, if $k_1=k_2$ then   $(1,2,0) \in  \Lambda_{\kappa}^{\perp}\setminus \Gamma^{\perp}$ and the two spaces do not agree. Further, in this case all steady states are degenerate.
\end{example}

 \begin{remark} Note that  Proposition~\ref{degenerate}, Lemma~\ref{omegajac} and Corollary~\ref{jaccomp} do not depend on the kinetics being of mass-action type, but hold for general (differentiable) kine\-tics with $f_{\kappa}$ and $\tilde{f}_{\kappa}$ changed accordingly. In particular, Corollary~\ref{jaccomp} characterizes degenerate steady states in terms of the determinant of the Jacobian of $\tilde{f}_{\kappa}$. 
\end{remark}

\section{Injectivity of chemical reaction networks}\label{sec5}
Here we introduce  the notion of injectivity: a network is injective if for all rate vectors $\kappa$ the function $\widetilde{f}_{\kappa}$ is injective over $\R^n_+$.   The definition is an extension  of the definition of injectivity for fully open networks \cite[Def. 2.8]{craciun-feinbergI} to arbitrary networks and,  together with Proposition~\ref{inject} below, it is in line with the definition given in a recent paper on \emph{concordant} networks \cite{shinar-conc}. We  show that a network is injective if and only if the Jacobian of $\widetilde{f}_{\kappa}$ is non-singular for all positive concentrations $c$ and for all $\kappa$ .

In \cite[Def. 6.1]{craciun-feinberg-semiopen}  a definition of injectivity is given for networks with $\Gamma\subsetneq \R^n$. It relies on the Jacobian of the species formation rate function restricted to the stoichiometric space.  We prove in Theorem \ref{injecclose} below that our definition agrees with their definition. The equivalence is also  claimed   in Remark 6.4 in \cite{craciun-feinberg-semiopen} without a proof.

\begin{definition}\label{injective} A network $\mathcal{N}$ is said to be \emph{injective} if for any  distinct $a,b\in \R^n_+$ such that  $a-b\in \Gamma$, we have $f_{\kappa}(a)\neq f_{\kappa}(b)$.
\end{definition}

The definition of injectivity is restricted to the interior of the positive orthant. However, we  show below that being injective also precludes  the   existence of distinct $a,b$ such that $a-b\in \Gamma$ and $f_{\kappa}(a)= f_{\kappa}(b)$, provided either that  $a\in \omR^n_+$, $b\in \R^n_+$ or that  $a,b\in \omR^n_+$ are on the boundary of $\R_+^n$  and fulfill a certain condition.
 For $a\in\overline{\R}^n_+$, let  $I_a=\{i|a_i=0\}$ be the indices for which $a_i$ is zero and let  $\mmY_a=\{y|y\rightarrow y'\in\mmR,I_a\cap{\rm supp} (y)\not=\emptyset\}$, where ${\rm supp} (y)=\{i|y_i\not=0\}$ is the support of $y$. That is, $\mmY_a$ is the set of reactant complexes involving at least one species $S_i$ for which $a_i=0$. It follows that $y$ belongs to $\mmY_a$ if and only if $a^y=0$.

 \begin{proposition}\label{inject}
Let  $\mmN$ be a network. The following two statements are equivalent:
\begin{enumerate}
\item[(i)] $\mmN$ is injective.
\item[(ii)] For any distinct $a,b\in \overline{\R}^n_+$ such that  $\mmY_a\cap\mmY_b=\emptyset$ and $a-b\in \Gamma$, $f_{\kappa}(a)\neq f_{\kappa}(b)$.
\end{enumerate}
In particular, if $b\in\R^n_+$, that is $I_b=\emptyset$, then $\mmY_a\cap\mmY_b=\emptyset$ is fulfilled for any $a\in \omR^n_+$. 
\end{proposition}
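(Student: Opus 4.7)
The implication (ii) $\Rightarrow$ (i) is immediate: every $c\in\R^n_+$ has $I_c=\emptyset$ and hence $\mmY_c=\emptyset$, so the hypothesis of (ii) is automatically fulfilled for any pair $a,b\in\R^n_+$.

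For (i) $\Rightarrow$ (ii) I argue by contraposition. Assume there exist distinct $a,b\in\omR^n_+$ with $\mmY_a\cap\mmY_b=\emptyset$ and $a-b\in\Gamma$, together with a rate vector $\kappa\in\R_+^{\mmR}$ such that $f_\kappa(a)=f_\kappa(b)$. The goal is to produce distinct $\tilde a,\tilde b\in\R^n_+$ with $\tilde a-\tilde b\in\Gamma$ and a positive rate vector $\tilde\kappa$ satisfying $f_{\tilde\kappa}(\tilde a)=f_{\tilde\kappa}(\tilde b)$, thereby contradicting (i). The natural perturbation is $\tilde a=a+\epsilon u$, $\tilde b=b+\epsilon u$ for small $\epsilon>0$ and $u\in\R^n_+$, since then $\tilde a-\tilde b=a-b\in\Gamma$ is preserved and both vectors lie in $\R^n_+$. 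Rate constants are adjusted reactant-wise via $\tilde k_{y\to y'}=k_{y\to y'}(a^y-b^y)/(\tilde a^y-\tilde b^y)$ whenever the ratio is well-defined, so that $f_{\tilde\kappa}(\tilde a)-f_{\tilde\kappa}(\tilde b)$ collapses term-by-term to $\sum k_{y\to y'}(a^y-b^y)(y'-y)=0$.

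The hypothesis $\mmY_a\cap\mmY_b=\emptyset$ forces at least one of $a^y,b^y$ to be strictly positive for every reactant $y$, and a case analysis on whether $y\in\mmY_a\setminus\mmY_b$, $y\in\mmY_b\setminus\mmY_a$ or $y\notin\mmY_a\cup\mmY_b$ shows that $a^y-b^y$ and $\tilde a^y-\tilde b^y$ share the same sign for small $\epsilon$ whenever $a^y\neq b^y$, giving $\tilde k_{y\to y'}>0$. The delicate case is that of reactants $y$ with $a^y=b^y>0$, where the numerator of the ratio vanishes while the denominator need not. Since such reactions contribute nothing to the original identity, it suffices to choose their $\tilde k_{y\to y'}>0$ so that $\sum_{y:\,a^y=b^y}\tilde k_{y\to y'}(\tilde a^y-\tilde b^y)(y'-y)=0$. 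One route is to exploit the freedom in $u\in\R^n_+$ to drive these specific differences $\tilde a^y-\tilde b^y$ to higher order in $\epsilon$ and solve the ensuing linear system by continuity; another is to work directly with the full system $\sum_{y\to y'}\tilde k_{y\to y'}(\tilde a^y-\tilde b^y)(y'-y)=0$ in $\tilde\kappa$, noting that $\kappa$ is an interior positive solution at $\epsilon=0$.

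The hardest step is to verify that the kernel of the perturbed linear system does not collapse in a way that eliminates all strictly positive solutions; this relies on a careful continuity and rank argument where the assumption $\mmY_a\cap\mmY_b=\emptyset$ ensures the requisite structural conditions on the coefficient matrix (in particular that no column transitions from a zero to a strictly nonzero limit as $\epsilon\downarrow0$ outside of the controllable cases above). Once $\tilde\kappa>0$ is obtained, the triple $(\tilde a,\tilde b,\tilde\kappa)$ contradicts (i), completing the proof.
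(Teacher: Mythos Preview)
The direction (ii)$\Rightarrow$(i) and the overall contrapositive strategy are fine, and your sign-matching for reactants $y$ with $a^y\neq b^y$ is correct for small $\epsilon$. The gap is precisely the ``delicate case'' you flag but do not resolve: reactants $y$ with $a^y=b^y>0$. Your uniform perturbation $\tilde a=a+\epsilon u$, $\tilde b=b+\epsilon u$ with $u\in\R^n_+$ moves \emph{all} coordinates, so for such $y$ the difference $\tilde a^y-\tilde b^y$ is generically nonzero and of uncontrolled sign. Neither of your proposed fixes closes this. Choosing $u$ to kill these differences to first order imposes, for each such $y$, a linear condition $\sum_i y_i u_i(a_i^{-1}-b_i^{-1})=0$; these need not admit a strictly positive solution $u$. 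The continuity route fails for a structural reason: at $\epsilon=0$ the coefficient of $\tilde k_{y\to y'}$ in the linear system vanishes for these reactions, so the solution set is an entire affine subspace containing $\kappa$; for $\epsilon>0$ that coefficient becomes nonzero and the solution set drops dimension, with no guarantee it still meets the open positive orthant. Concretely, if some such $y'-y$ is linearly independent of the other reaction vectors appearing with nonzero coefficient, the only solution forces $\tilde k_{y\to y'}=0$.

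The paper's proof sidesteps this entirely by perturbing \emph{only} the coordinates in $I=I_a\cup I_b$, keeping $\tilde a_i=a_i$ and $\tilde b_i=b_i$ for $i\notin I$. Every reactant $y$ with $a^y=b^y>0$ lies in $\mmY_a^c\cap\mmY_b^c$, hence has support disjoint from $I$, so $\tilde a^y=a^y$ and $\tilde b^y=b^y$ exactly and the contribution is preserved with $\tilde k_{y\to y'}=k_{y\to y'}$. The remaining work is to choose the perturbed coordinates on $I$ (via a recursive construction rather than a uniform $\epsilon$) so that the ratios needed for $y\in\mmY_a\cup\mmY_b$ come out positive. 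Your argument can be repaired along the same line: take $u_i=0$ for $i\notin I$ (so $u\in\omR^n_+$, not $\R^n_+$), and the delicate case disappears; what remains is essentially the paper's construction.
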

\begin{proof}
(ii) obviously implies (i). To prove the reverse we assume that  there are $a,b\in\overline{\R}^n_+$, such that 
$\mmY_a\cap\mmY_b=\emptyset$,  $b-a\in\Gamma$ and $f_{\kappa}(a)=f_{\kappa}(b)$ and  show that $\mmN$ cannot be injective.  Let $\gamma_i=b_i-a_i$ and for a set $M\subseteq\{1,\ldots,n\}$ and $y\in\mmC$ define $x^{y_M}=\prod_{i\in M}x^{y_i}$. Further, define $I=I_a\cup I_b$ and $J=I^c$. If $I=\emptyset$, then clearly $\mmN$ cannot be injective. Hence, assume that  $I\not=\emptyset$. We seek to define a rate vector $\tilde{\kappa}=(\tilde{k}_{y\rightarrow y'})_{y\rightarrow y'}$ and  $\ta,\tilde{b}$ such that $\ta_i=a_i$, $\tilde{b}_i=b_i$ for $i\not\in I$,  $\ta_i,\tilde{b}_i>0$ and $\tilde{b}_i-\ta_i=\gamma_i$ for $i\in I$, and  
$$\sum_{y\rightarrow y'\in \mmR} \tilde{k}_{y\rightarrow y'} (\ta^y-\tilde{b}^y)(y'-y)=0,$$ 
that is, $f_{\tilde{\kappa}}(\ta)=f_{\tilde{\kappa}}(\tilde{b})$. Then, since $\ta,\tilde{b}\in \R^n_+$ and $\tb-\ta\in \Gamma$, it  follows that $\mmN$ is not injective.
The equality $f_{\kappa}(a)-f_{\kappa}(b)=0$ can be rewritten as:
{\small $$ 0=\sum_{\substack{y\rightarrow y' \\ y\in \mmY_a^c\cap\mmY_b^c}} k_{y\rightarrow y'} (a^y-b^y)(y'-y)+
\sum_{\substack{y\rightarrow y' \\ y\in \mmY_a\cap\mmY_b^c}} k_{y\rightarrow y'} (a^y-b^y)(y'-y)+
\sum_{\substack{y\rightarrow y' \\ y\in \mmY_a^c\cap\mmY_b}} k_{y\rightarrow y'} (a^y-b^y)(y'-y),$$}
Assume that we can find 
 $\ta_i,\tilde{b}_i>0$ with $\tilde{b}_i-\ta_i=\gamma_i$ for all $i$, $\tilde{a_i}=a_i$, $\tilde{b}_i=b_i$ for $i\not\in I$, and 
 $\alpha_y>0$ such that
 \begin{equation}\label{goal}
\frac{\alpha_y}{\alpha_y+1} \tilde{b}^y= \ta^y, \quad\textrm{if } y\in\mmY_a\cap\mmY_b^c,\quad\textrm{and} \quad \frac{\alpha_y}{\alpha_y+1} \tilde{a}^y= \tb^y,
 \quad\textrm{if } y\in\mmY_a^c\cap\mmY_b.
\end{equation}
Then, let $\tilde{k}_{y\rightarrow y'}=k_{y\rightarrow y'}$ for $y\in \mmY_a^c\cap\mmY_b^c$,  
$\tilde{k}_{y\rightarrow y'}=(\alpha_y+1)k_{y\rightarrow y'}b^y/\tilde{b}^y$ for  $y\in \mmY_a\cap\mmY_b^c$ and $\tilde{k}_{y\rightarrow y'}=(\alpha_y+1)k_{y\rightarrow y'}a^y/\tilde{a}^y$ for  $y\in \mmY_a^c\cap\mmY_b$.
With these definitions, if $y\in  \mmY_a\cap\mmY_b^c$ since $a^y=0$ we have
\begin{align*}
\tilde{k}_{y\rightarrow y'}(\ta^y-\tb^y) &=k_{y\rightarrow y'}  (\alpha_y+1) (\ta^y b^y/\tilde{b}^y - b^y) = 
k_{y\rightarrow y'} (\alpha_y b^y -  (\alpha_y+1)  b^y) \\ &  = k_{y\rightarrow y'} ( -b^y) =  k_{y\rightarrow y'} ( a^y-b^y)
\end{align*}
 Analogously, $ \tilde{k}_{y\rightarrow y'}(\ta^y-\tb^y) = k_{y\rightarrow y'} ( a^y-b^y)$  if $y\in  \mmY_a^c\cap\mmY_b$, using  $b^y=0$.
It follows that for $\tilde{\kappa}=(\tilde{k}_{y\rightarrow y'})_{y\rightarrow y'}$, we have $f_{\tilde{\kappa}}(\ta)-f_{\tilde{\kappa}}(\tilde{b}) = f_{\kappa}(a)-f_{\kappa}(b)=0$ as desired.

All that remains is to find  $\ta_i,\tilde{b}_i>0$ for $i\in I$ and 
 $\alpha_y>0$ such that \eqref{goal} is fulfilled.  
  If $y\notin \mmY_b$, then 
$\tilde{b}^y,\ta^y$ are independent of the value of $\ta_i,\tb_i$ for $i\in I_b$. Therefore, we define first $\ta_i,\tb_i$ for $i\in I_a$ and  focus on fulfilling \eqref{goal} for $y\in \mmY_a\cap\mmY_b^c$. Further, since we want $\tb_i-\ta_i=\gamma_i$, once $\ta_i$ is defined, we have $\tb_i = \ta_i + \gamma_i$.

Let $I_a=\{i_1,\ldots,i_m\}$,  $I_k=\{i_1,\ldots,i_{k}\}$ ($k\leq m$) with $I_0=\emptyset$ and $J_k=J\cup I_k$. Further, recursively define $\mmY_k=\{y \in \mmY_a |\ I_a\cap {\rm supp}(y)\subseteq I_k\} \setminus \mmY_{k-1}$  with $\mmY_0=\emptyset$. It follows that $\bigcup_{k=1}^m\mmY_k=\mmY$ such that any complex in $\mmY$ is in precisely one $\mmY_k$. That is, if $y\in\mmY_k$ then $i_k$ is the largest index in $I_a$ in the support of $y$.
We define $\ta_{i_k}, \tb_{i_k}$ recursively. Note that $\gamma_i>0$ for all $i\in I_a$. Assume that $\ta_{i_1},\ldots,\ta_{i_{k-1}}$ and $\tb_{i_1},\ldots,\tb_{i_{k-1}}$ are defined and hence the products $\tb^{y_{J_{k-1}}}$, $\ta^{y_{J_{k-1}}}$ are fixed.  For $i_k$ do the following: if $\mmY_k=\emptyset$, let $\ta_{i_k}=1$ and $\tb_{i_k}=1+\gamma_{i_k}$. If $\mmY_k\not=\emptyset$, observe that for  $y\in \mmY_k$ (for which $y_{i_k}\neq 0$), the equality \eqref{goal} can we written as 
\begin{equation}\label{goal2}
\frac{ \ta^{y_{i_k}}_{i_k} }{ (\ta_{i_k}+\gamma_{i_k})^{y_{i_k}} } = \frac{\alpha_y}{\alpha_y+1}\frac{ \tb^{y_{J_{k-1}}} }{ \ta^{y_{J_{k-1}}} },\quad\textrm{that is,}\quad \alpha_y = \frac{r_{i_k}(\ta_{i_k},y_{i_k})}{\frac{ \tb^{y_{J_{k-1}}} }{ \ta^{y_{J_{k-1}}} } - r_{i_k}(\ta_{i_k},y_{i_k})},
\end{equation}
with $r_{i_k}(\ta_{i_k},y_{i_k}):= \ta^{y_{i_k}}_{i_k}/(\ta_{i_k}+\gamma_{i_k})^{y_{i_k}}>0$. The function  $r_{i_k}$ is  increasing in $a_{i_k}$ and  $r_{i_k}(0,y_{i_k})=0$  for all $y_{i_k}$.
By defining $\ta_{i_k}>0$ arbitrarily such that $$r_{i_k}(\ta_{i_k},y_{i_k})<\min\Big( \frac{ \tilde{b}^{y_{J_{k-1}}}}{\ta^{y_{J_{k-1}}} }\Big|y \in\mmY_k \Big) $$  for all $y\in \mmY_k$ (which is a finite set), we obtain $\alpha_y>0$ fulfilling \eqref{goal2} as desired. 

The same procedure is applied to define $\ta_i,\tb_i$ for $i\in I_b$, with the roles of $a$ and $b$ reversed. In this case, however, $b_i=0$ implies that $\gamma_i<0$ and $\ta_i= \tb_i-\gamma_i$ for $i\in I_b$. Therefore, $r_{i_k}(\tb_{i_k},y_{i_k})$ becomes $\tb^{y_{i_k}}_{i_k}/(\tb_{i_k}-\gamma_{i_k})^{y_{i_k}} $, which also is increasing. 
\end{proof}

The assumption $\mmY_a\cap\mmY_b=\emptyset$  in Proposition~\ref{inject}(ii) cannot be relaxed. Consider the network with  $\mmS=\{A,B\}$, $\mmC=\{A+B,A+2B,0\}$ and reactions 
$A + B\rightarrow 0$, $A+2B \rightarrow 0$. This network is injective and open. If $a=(1,0)$ and $b=(0,1)$ then  $\mmY_a\cap\mmY_b =  \{A+B,A+2B\} \neq\emptyset$ and for any rate vector $\kappa$ we have $f_{\kappa}(a)=f_{\kappa}(b)$. 

\begin{remark} In \cite{shinar-conc}, a network $\mmN$ with \emph{arbitrary} kinetics is said to be injective if $f_{\kappa}(a)\not=f_{\kappa}(b)$ for any pair of stoichiometrically compatible concentration vectors $a,b$, at least one of which  is positive. The condition given in Proposition~\ref{inject}(ii) is slightly more general in that both $a$ and $b$ can be non-negative.
\end{remark}

If $\mathcal{N}$ is  injective, then for any choice of rate vector $\kappa$ at most one positive steady state can exist within each stoichiometric class, i.e., for every $c_0\in \R^n_+$ there exists at most one $c\in \{c_0+\Gamma\}\cap\R^n_+$ such that $f_{\kappa}(c)=0$. In other words, if $\mathcal{N}$ is  injective then $\mmN$ does not have the capacity for multiple positive steady states. However, the reverse might not be true: non-injective networks exist that do not have the capacity for multiple positive steady states.  An example is provided in Example~\ref{ex:noninj}.

The proof of the following theorem is adapted from the proof of \cite[Th 3.1]{craciun-feinbergI}.

\begin{theorem}\label{injecclose} Let $\mathcal{N}=(\mms,\mmC,\mmR)$ be a network. Then,
$\mmN$ is injective if and only if $\ker(J_{c}(f_{\kappa}))\cap \Gamma=  \{0\}$ for all $c\in \R^n_+$ and $\kappa\in \R^{\mmR}_+$.
\end{theorem}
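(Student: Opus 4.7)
The plan is to prove the equivalence by the contrapositive in each direction, adapting the Craciun--Feinberg geometric-mean/hyperbolic-sine substitution \cite{craciun-feinbergI}. The bridge between the fully open case treated there and the general case here is the extended function $\widetilde f_\kappa$ (Definition~\ref{extended}) together with Corollary~\ref{jaccomp}, which reformulates $\ker(J_c(f_\kappa))\cap\Gamma=\{0\}$ as the non-vanishing of $\det J_c(\widetilde f_\kappa)$.

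For $(\Leftarrow)$, suppose $\mmN$ is not injective: there exist distinct $a,b\in\R^n_+$ with $a-b\in\Gamma$ and $f_\kappa(a)=f_\kappa(b)$ for some $\kappa\in\R^{\mmR}_+$. Set $c^*_i=\sqrt{a_ib_i}$ and $\mu_i=\ln(a_i/b_i)\neq 0$, so that $a_i=c^*_i e^{\mu_i/2}$, $b_i=c^*_i e^{-\mu_i/2}$, and
\[
a^{y_r}-b^{y_r}=2(c^*)^{y_r}\sinh(\mu\cdot y_r/2).
\]
Define $\tilde k_r=k_r\sinh(\mu\cdot y_r/2)/(\mu\cdot y_r/2)$, extended continuously to $k_r$ when $\mu\cdot y_r=0$; positivity follows from $\sinh(x)/x>0$ for all real $x$. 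Substituting into $f_\kappa(a)-f_\kappa(b)=0$ and using formula~\eqref{jacform}, I obtain $J_{c^*}(f_{\tilde\kappa})(c^*\mu)=0$, where $c^*\mu:=(c^*_i\mu_i)_i\neq 0$. The delicate step is to produce a nonzero kernel vector actually lying in $\Gamma$: by Corollary~\ref{jaccomp} this amounts to showing $\det J_{c^*}(\widetilde f_{\tilde\kappa})=0$, which I would establish by combining $J_{c^*}(f_{\tilde\kappa})(c^*\mu)=0$ with the linear constraint $2c^*\sinh(\mu/2)=a-b\in\Gamma$ to force singularity of the square matrix $J_{c^*}(\widetilde f_{\tilde\kappa})$.

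For $(\Rightarrow)$, assume the Jacobian criterion fails, so $0\neq\gamma\in\Gamma\cap\ker J_{c^*}(f_\kappa)$ for some $c^*,\kappa$. For small $\theta>0$, set $a_i=c^*_i e^{\theta\gamma_i/c^*_i}$, $b_i=c^*_i e^{-\theta\gamma_i/c^*_i}$, and $\tilde k_r=k_r\theta(y_r\ast_{c^*}\gamma)/\sinh(\theta(y_r\ast_{c^*}\gamma))$ (positive, with limit $k_r$ when $y_r\ast_{c^*}\gamma=0$). A direct calculation using the same sinh identity gives $f_{\tilde\kappa}(a)-f_{\tilde\kappa}(b)=2\theta J_{c^*}(f_\kappa)(\gamma)=0$. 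The only obstruction is that $a-b=2c^*\sinh(\theta\gamma/c^*)=2\theta\gamma+O(\theta^3)$ lies in $\Gamma$ only to leading order in $\theta$. I would remove this by perturbing $\gamma$ to $\gamma(\theta)=\gamma+\theta^2\gamma_1+\theta^4\gamma_2+\cdots$ with each $\gamma_i\in\ker J_{c^*}(f_\kappa)$, chosen order-by-order via an implicit function argument; existence of such corrections at each order follows from $\ker J+\Gamma=\R^n$, which is itself forced by $\ker J\cap\Gamma\neq\{0\}$ together with the rank identity $\operatorname{rank} J=s-\dim(\ker J\cap\Gamma)$.

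The main obstacle, present in both directions, is reconciling the exponential parameterization natural to the multiplicative sinh rescaling of mass-action monomials with the linear condition defining $\Gamma$. In the fully open case of Craciun and Feinberg $\Gamma=\R^n$ eliminates this tension, and the naive geometric-mean construction finishes the argument; for closed networks the interplay between the linear top block of $\widetilde f_\kappa$ (encoding stoichiometric compatibility) and its mass-action bottom block is what both creates the difficulty and provides the clean algebraic framework for resolving it.
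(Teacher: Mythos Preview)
Your proposal has a genuine gap, and it stems from the choice of parameterization. The geometric-mean/sinh substitution you use is the one natural for the \emph{fully open} case, but it does not interact well with the linear constraint $a-b\in\Gamma$.

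In the $(\Leftarrow)$ direction you obtain $J_{c^*}(f_{\tilde\kappa})(c^*\mu)=0$, but $c^*\mu$ is \emph{not} the vector $a-b=2c^*\sinh(\mu/2)$ that lies in $\Gamma$; these differ componentwise by the nonlinear factor $\sinh(\mu_i/2)/(\mu_i/2)$. Your suggestion to ``force singularity of $J_{c^*}(\widetilde f_{\tilde\kappa})$'' from these two separate facts does not go through: singularity of $J_{c^*}(\widetilde f_{\tilde\kappa})$ is equivalent to the existence of a \emph{single} nonzero vector in $\ker J_{c^*}(f_{\tilde\kappa})\cap\Gamma$, and having one vector in the kernel and a different vector in $\Gamma$ does not produce one. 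In the $(\Rightarrow)$ direction your perturbative fix rests on the claimed identity $\operatorname{rank} J = s-\dim(\ker J\cap\Gamma)$, which is false in general: with $n=2$, $\Gamma=\operatorname{span}(e_1)$ and $J=\begin{pmatrix}0&1\\0&0\end{pmatrix}$ one has $\operatorname{im} J\subseteq\Gamma$, $\operatorname{rank} J=1$, yet $\ker J\cap\Gamma=\Gamma$ has dimension $1$, so $s-\dim(\ker J\cap\Gamma)=0$. Consequently $\ker J+\Gamma=\operatorname{span}(e_1)\neq\R^2$ and the order-by-order correction scheme has no reason to succeed.

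The paper's proof sidesteps both difficulties by replacing the geometric mean with the \emph{logarithmic mean}. In the forward direction one sets $c_i=(a_i-b_i)/\log(a_i/b_i)$ (and $c_i=1$ when $a_i=b_i$); then $\gamma_i/c_i=\log(a_i/b_i)$ for $\gamma:=a-b$, so $a^y=b^y e^{y*_c\gamma}$ and one can rescale rates so that $J_c(f_\eta)(\gamma)=0$ with the \emph{same} $\gamma\in\Gamma$. In the reverse direction one sets $b_i=\gamma_i/(e^{\gamma_i/c_i}-1)$ and $a_i=b_ie^{\gamma_i/c_i}$, which gives $a_i-b_i=\gamma_i$ \emph{exactly}, so $a-b=\gamma\in\Gamma$ with no perturbation needed. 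The point is that this parameterization is engineered so that the vector playing the role of the Jacobian argument and the vector required to lie in $\Gamma$ are literally identical; this eliminates the tension you identify in your last paragraph rather than trying to resolve it after the fact.
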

\begin{proof}
 $\mmN$ is not injective if and only if there exists $\kappa\in \R^{\mmR}_+$ and distinct  $a,b\in \R^n_+$ such that   $a-b\in \Gamma$ and $f_{\kappa}(a)= f_{\kappa}(b)$.  Further, $\ker(J_{c}(f_{\eta}))\cap \Gamma\neq  \{0\}$ for some $c\in \R^n_+$ and $\eta\in \R^{\mmR}_+$ if and only if there exists $\gamma\in \Gamma$ such that $J_{c}(f_{\eta})(\gamma)=0$. 
By definition and  using \eqref{jacform}, 
\begin{eqnarray*}
f_{\kappa}(a)- f_{\kappa}(b) =0 & \Leftrightarrow &  \sum_{y\rightarrow y'\in \mmR} k_{y\rightarrow y'} (a^y-b^y)(y'-y)=0, \\
J_{c}(f_{\eta})(\gamma)=0  & \Leftrightarrow &  \sum_{y\rightarrow y'\in \mmR} \eta_{y\rightarrow y'} c^y(y*_c \gamma) (y'-y)=0.  \end{eqnarray*}
We will show that given distinct  $a,b\in \R^n_+$ such that  $\gamma:=a-b\in \Gamma$ and some rate vector $\kappa$, there exist  $c\in \R^n_+$ and $\eta\in \R^{\mmR}_+$ such that
\begin{equation}\label{eqrefs} k_{y\rightarrow y'} (a^y-b^y) = \eta_{y\rightarrow y'} c^y(y*_c \gamma)\end{equation}
and \emph{vice versa}. 
Consider distinct $a,b\in \R^n_+$ such that    $\gamma:=a-b\in \Gamma$, $\gamma\neq 0$. If $a_i-b_i\neq 0$ define $c_i:=\frac{a_i - b_i}{\log  a_i/b_i}>0$ and let $c_i=1$ otherwise.  Since $\gamma_i=0$ if $a_i=b_i$, we have 
$a^y =b^ye^{y*_c \gamma}$. Note that the signs of $a^y-b^y = b^y(e^{y*_c \gamma}-1)$ and $y*_c \gamma$ agree.  If $y*_c \gamma=0$, let  $\eta_{y\rightarrow y'}= 1$.
Otherwise, we let $\eta_{y\rightarrow y'}=\frac{ k_{y\rightarrow y'} (a^y-b^y)}{c^y(y*_c \gamma)} >0$ and  \eqref{eqrefs} is fulfilled.
Reciprocally, given $\gamma\in \Gamma$, $\gamma\neq 0$, $c\in \R^n_+$ and $\eta\in \R^{\mmR}_+$, define $a,b\in \R^n_+$ by 
$b_i=a_i=1$ if $\gamma_i=0$ and $b_i=\gamma_i/(e^{\gamma_i/c_i}-1)>0$, $a_i=b_ie^{\gamma_i/c_i}>0$ otherwise. Then  $a-b=\gamma\in \Gamma$ and  $a^y-b^y= b^y(e^{y*_c\gamma}-1)$ has the same sign as $y*_c \gamma$. If $a^y-b^y=0$, define $k_{y\rightarrow y'}=1$. Otherwise, define 
$k_{y\rightarrow y'} = \frac{\eta_{y\rightarrow y'} (y*_c \gamma)}{c^y (a^y-b^y)}>0$ 
and equality \eqref{eqrefs} is fulfilled.

\end{proof}

\begin{remark} The summand in $f_{\kappa}$ corresponding to an inflow reaction $0\rightarrow y'$ in $\mmR$ takes the form $k_{0\rightarrow y'} y'$ and thus is independent of the concentration vector $c$. It follows that $J_c(f_{\kappa})$ is independent of the presence or absence of inflow reactions in $\mmR$ and so is the property of being injective.
\end{remark}

We have thus obtained a characterization of injective networks in terms of the Jacobian associated with the species formation rate function. Together with Corollary \ref{jaccomp} we obtain:

\begin{corollary}\label{extendedcor} Let $\mathcal{N}$ be a network,
 $\{\omega^1,\dots,\omega^d\}$ a reduced basis of  $\Gamma^{\perp}$ and $\widetilde{f}_{\kappa}(c)$  the associated extended rate function. $\mmN$ is injective if and only if
$\det(J_{c}(\widetilde{f}_{\kappa}))\neq 0$ for all $\kappa\in \R^{\mmR}_+$ and $c\in \R^n_+$. 
\end{corollary}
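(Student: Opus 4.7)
The plan is simply to chain together the two main results already established in the preceding sections. By Theorem~\ref{injecclose}, injectivity of $\mmN$ is equivalent to the statement that $\ker(J_c(f_{\kappa})) \cap \Gamma = \{0\}$ for every $c \in \R^n_+$ and every $\kappa \in \R^{\mmR}_+$. So it suffices to show that, for each such fixed pair $(c,\kappa)$, the condition $\ker(J_c(f_{\kappa})) \cap \Gamma = \{0\}$ is equivalent to $\det(J_c(\widetilde{f}_{\kappa})) \neq 0$.

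This latter equivalence is precisely the content of Corollary~\ref{jaccomp}, which was derived using the reduced basis $\{\omega^1,\dots,\omega^d\}$ of $\Gamma^\perp$: the first $d$ rows of $J_c(\widetilde{f}_{\kappa})$ are the vectors $\omega^1,\dots,\omega^d$, and the remaining rows are rows $d+1,\dots,n$ of $J_c(f_{\kappa})$. By Lemma~\ref{omegajac} the first $d$ rows of $J_c(f_{\kappa})$ are linear combinations of rows $d+1,\dots,n$, so the $(n+d)\times n$ matrix $J_c(\overline{f}_{\kappa})$ has the same rank as the square matrix $J_c(\widetilde{f}_{\kappa})$, and Proposition~\ref{degenerate} (applied with $F=\Gamma$) identifies maximality of this rank with the kernel condition.

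Combining the two equivalences gives: $\mmN$ is injective $\iff$ for all $c \in \R^n_+$ and $\kappa \in \R^{\mmR}_+$, $\ker(J_c(f_{\kappa})) \cap \Gamma = \{0\}$ $\iff$ for all $c \in \R^n_+$ and $\kappa \in \R^{\mmR}_+$, $\det(J_c(\widetilde{f}_{\kappa})) \neq 0$. No additional work is needed; the quantification over $(c,\kappa)$ passes through unchanged because the second equivalence is pointwise in $(c,\kappa)$. There is no real obstacle here — the corollary is a direct packaging of earlier results into the form most convenient for computation, since $\det(J_c(\widetilde{f}_{\kappa}))$ is a concrete polynomial in $c$ and $\kappa$ whose sign can be analyzed via its coefficients, as illustrated in Example~\ref{futile:jac}.
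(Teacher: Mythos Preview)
Your proof is correct and matches the paper's approach exactly: the corollary is stated immediately after Theorem~\ref{injecclose} with the remark that it follows from combining that theorem with Corollary~\ref{jaccomp}, which is precisely the chaining you carry out. Your additional recap of how Corollary~\ref{jaccomp} follows from Proposition~\ref{degenerate} and Lemma~\ref{omegajac} is more detail than the paper provides, but entirely in keeping with its argument.
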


\begin{remark} Statements similar to Theorem~\ref{injecclose} and Corollary~\ref{extendedcor}  cannot be stated for individual $\kappa$, since $\kappa$ and $\eta$  generally are different in the proof of  Theorem~\ref{injecclose}. However, if the total degree of each of polynomials in the components in $\tilde{f}_{\kappa}$ is at most two then $\tilde{f}_{\kappa}$ is an injective function if and only if the Jacobian is non-singular  \cite{Bass:1982p770}.
\end{remark}

Note that $\mmN$ is injective if and only if the extended rate function   $\widetilde{f}_{\kappa}$ associated 
with a reduced basis is injective over $\R^n_+$. Further, as is observed in  \eqref{ex:jac} for the futile cycle, $\det(J_{c}(\widetilde{f}_{\kappa}))$ is a homogeneous polynomial in the entries of $\kappa$ with total degree $s$, because  the  rows $1,\dots,d$ of $J_{c}(\widetilde{f}_{\kappa})$ are constants (that is, independent of $c$ and $\kappa$).

\begin{remark} A general version of the corollary above  has recently been formulated in \cite{gilles} for a certain class of polynomial maps, without the restriction to species formation rate functions and conservation laws associated to networks. 
\end{remark}

From Definition \ref{def:deg} and Theorem \ref{injecclose} we obtain the following corollary.
\begin{corollary} \label{inj-deg}
 Let $\mathcal{N}$ be a network. If $\mmN$ is injective then there exist no degenerate positive steady states. 
\end{corollary}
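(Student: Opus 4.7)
The plan is to observe that this corollary is essentially an immediate consequence of Theorem~\ref{injecclose} combined with Definition~\ref{def:deg}, so no new machinery is needed.

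First I would fix a rate vector $\kappa\in\R^{\mmR}_+$ and a positive steady state $c^*\in\R^n_+$ of $\mmN$ corresponding to $\kappa$. The goal is to show $c^*$ is non-degenerate, i.e.~$\ker(J_{c^*}(f_{\kappa}))\cap\Gamma=\{0\}$ in the sense of Definition~\ref{def:deg}.

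Next I would invoke Theorem~\ref{injecclose}, which states that $\mmN$ being injective is equivalent to $\ker(J_{c}(f_{\kappa}))\cap\Gamma=\{0\}$ holding for \emph{every} $c\in\R^n_+$ and \emph{every} $\kappa\in\R^{\mmR}_+$. Specializing this to the chosen $\kappa$ and to $c=c^*$ (which lies in $\R^n_+$ by hypothesis) yields $\ker(J_{c^*}(f_{\kappa}))\cap\Gamma=\{0\}$, which by Definition~\ref{def:deg} is exactly the statement that $c^*$ is non-degenerate. Since $\kappa$ and $c^*$ were arbitrary, this holds for all positive steady states of $\mmN$.

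There is no real obstacle: Theorem~\ref{injecclose} already does all the work of linking injectivity with the kernel condition at every positive $c$, and Definition~\ref{def:deg} is phrased exactly in terms of that same kernel condition at $c=c^*$. One could equivalently cite Corollary~\ref{extendedcor} and Corollary~\ref{jaccomp}, writing the argument in terms of $\det(J_{c^*}(\widetilde{f}_{\kappa}))\neq 0$, but the direct route through Theorem~\ref{injecclose} is cleaner and avoids introducing the reduced basis.
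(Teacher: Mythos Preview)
Your proof is correct and matches the paper's approach exactly: the paper simply states that the corollary follows from Definition~\ref{def:deg} and Theorem~\ref{injecclose}, which is precisely the route you take.
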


\begin{remark} It follows from Theorem~\ref{injecclose} and Remark~\ref{kinetics-stoich} that if the two spaces $\Gamma$ and $\Lambda_{\kappa}$ are not identical for some rate vector $\kappa$ then the network cannot be injective.
\end{remark}

As noticed in Example~\ref{ex:jac}, the determinant of the Jacobian of the extended species rate formation function of the futile cycle can never vanish. Thus, the futile cycle is injective and does not have the capacity for multiple positive steady states.

The coefficients of the determinant of the Jacobian of fully open networks are characterized by Craciun and Feinberg in \cite{craciun-feinbergI} and this characterization  easily generalizes to open networks. Thus, in order to characterize the coefficients of the determinant of the Jacobian of an arbitrary network $\mmN$, we consider the associated fully open network, $\mmN^o$, and ``match'' the terms of the respective determinants. This is done in \S\ref{closed} after we discuss  some results about open networks in the next section.

\section{Injective open chemical reaction networks}\label{sec6}
Recall that a network is open if its stoichiometric space is $\R^n$. If this  is the case then $\widetilde{f}_{\kappa}=f_{\kappa}$. By Theorem \ref{injecclose}, an open network $\mmN$   is injective if and only if the Jacobian $J_c(f_{\kappa})$ is non-singular, i.e., $\det(J_c(f_{\kappa}))\neq 0$ for all rate vectors $\kappa\in \R^{\mmR}_+$ and all $c\in \R^n_+$. 
Hidden in the proof of Theorem 3.1 in  \cite{craciun-feinbergI} and Theorem \ref{injecclose} above we find a simplification of the characterization of injective open networks: for a network to be injective it suffices to fix any concentration vector $c\in \R^n_+$ and show that $\det(J_c(f_{\kappa}))\neq 0$, for all rate vectors $\kappa\in \R^{\mmR}_+$.

\begin{proposition}\label{simplconc}
An open network  $\mmN$ is  injective if and only if $\det(J_c(f_{\kappa}))\neq 0$ for all rate vectors $\kappa\in \R^{\mmR}_+$ and  a fixed positive $c\in \R^n_+$.
\end{proposition}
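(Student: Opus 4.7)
The forward direction is immediate: if $\mmN$ is injective then by Corollary~\ref{extendedcor} (and noting that for an open network $\Gamma=\R^n$, so $\widetilde{f}_\kappa=f_\kappa$) we have $\det(J_c(f_\kappa))\neq 0$ for \emph{every} $c\in\R^n_+$ and every $\kappa$, so in particular for any fixed choice of $c$. The content of the proposition is the reverse implication, and my plan is to show that the non-vanishing of $\det(J_c(f_\kappa))$ at a single fixed $c\in\R^n_+$, for all $\kappa\in\R^{\mmR}_+$, propagates to every other $c'\in\R^n_+$.

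The key tool is the explicit formula \eqref{jacform}. Given an arbitrary $c'\in\R^n_+$ and $\kappa'\in\R^{\mmR}_+$, I would write
\begin{equation*}
J_{c'}(f_{\kappa'})(\gamma)=\sum_{y\rightarrow y'\in\mmR} k'_{y\rightarrow y'}(c')^y (y*_{c'}\gamma)(y'-y),
\end{equation*}
and then perform two substitutions that together turn this into $J_c(f_{\tilde\kappa})$ evaluated at a rescaled vector. First, let $D$ be the invertible diagonal linear map on $\R^n$ with entries $D_{ii}=c_i/c'_i$; then $y*_{c'}\gamma=\sum_i y_i\gamma_i/c'_i=\sum_i y_i(D\gamma)_i/c_i=y*_c(D\gamma)$. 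Second, absorb the monomial factor $(c')^y/c^y>0$ into the rate constants by setting $\tilde k_{y\rightarrow y'}=k'_{y\rightarrow y'}(c')^y/c^y$, which defines a legitimate rate vector $\tilde\kappa\in\R^{\mmR}_+$ because $c,c'\in\R^n_+$.

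With these substitutions I obtain the identity of linear maps
\begin{equation*}
J_{c'}(f_{\kappa'})=J_c(f_{\tilde\kappa})\circ D,
\end{equation*}
so $\det J_{c'}(f_{\kappa'})=\det D\cdot\det J_c(f_{\tilde\kappa})$. Since $\det D\neq 0$, the hypothesis $\det J_c(f_{\tilde\kappa})\neq 0$ (which holds because $\tilde\kappa\in\R^{\mmR}_+$) yields $\det J_{c'}(f_{\kappa'})\neq 0$. Running this over arbitrary $c'$ and $\kappa'$ shows that the Jacobian of $f_\kappa$ is non-singular at every positive concentration for every positive rate vector, and Theorem~\ref{injecclose} (using $\Gamma=\R^n$, hence $\ker J_c(f_\kappa)\cap\Gamma=\ker J_c(f_\kappa)=\{0\}$) then gives injectivity of $\mmN$.

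The only mild obstacle is spotting the correct change of variable: the monomial prefactors $c^y$ can be freely absorbed into $\kappa$ because $c\in\R^n_+$, while the dependence of the bilinear form $y*_c\gamma$ on $c$ is handled by a single invertible diagonal rescaling of $\gamma$. Once this is noticed, the argument is essentially bookkeeping with \eqref{jacform}, and no information is lost in passing from ``all $c$'' to ``one fixed $c$''.
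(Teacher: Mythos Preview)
Your proof is correct and takes essentially the same approach as the paper: both absorb the monomial factor $(c')^y/c^y$ into a new rate vector and handle the bilinear form $y*_{c'}\gamma$ by the diagonal rescaling $\gamma\mapsto D\gamma$ with $D_{ii}=c_i/c'_i$. The paper phrases this as a contrapositive (a kernel element at $(x,\kappa)$ produces one at $(c,\eta)$), whereas you package it as the matrix identity $J_{c'}(f_{\kappa'})=J_c(f_{\tilde\kappa})\circ D$ and take determinants; the substitutions are identical.
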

\begin{proof} By Theorem \ref{injecclose}, it is enough to prove that 
 $\det(J_c(f_{\kappa}))\neq 0$ for all rate vectors $\kappa\in \R^{\mmR}_+$ and all  $c\in \R^n_+$ if and only if the statement holds for a fixed $c\in \R^n_+$. The forward implication is obvious. 
To see the reverse,  assume that $\det(J_x(f_{\kappa}))= 0$ for some rate vector $\kappa\in \R^{\mmR}_+$ and $x\in \R^n_+$, that is,  there exists a non-zero vector $\gamma \in \R^n$ such that $0 = J_x(f_{\kappa})(\gamma)$.
Define $\eta = (\eta_{y\rightarrow y'})_{y\rightarrow y'} \in \R^{\mmR}_+$ with $\eta_{y\rightarrow y'}=k_{y\rightarrow y'} x^y/c^y>0$, and $\delta=(\delta_1,\dots,\delta_n)$ with $\delta_i = \gamma_ic_i/x_i$. With these definitions, we have $k_{y\rightarrow y'}x^y = \eta_{y\rightarrow y'} c^y$ and $y *_x \gamma =y*_c\delta$. 
Then, using \eqref{jacform}, we have
$$J_c(f_{\eta})(\delta)=  \sum_{y\rightarrow y'\in \mmR} \eta_{y\rightarrow y'} c^y (y *_c \delta) (y'-y)=J_x(f_{\kappa})(\gamma)=0,$$
 which implies that  $\det(J_c(f_{\eta}))=0$ and we have reached  a contradiction. 
\end{proof}

The simplification presented here applies to any open network, independently of whether it contains outflow reactions or not.  We have proved that  injectivity of an open network can be checked using the Jacobian criterion with the concentration vector  fixed to $\mathbf{1}:=(1,\dots,1)$. In this case the determinant of the Jacobian is a polynomial depending only on the rate constants and this reduces the number of variables substantially. Further, the polynomial is homogeneous of total degree $n$.

In \cite[Rk. 2.9]{craciun-feinbergI} a different simplification is performed where the rate constants of all species outflow reactions are fixed to $1$. We state this result as a proposition below and give the proof for completeness. 

\begin{proposition}\label{simplrate} Let $\mmN$ be an open network and fix a subset $\mathcal{O}\subseteq \{1,\dots,n\}$. The following statements are equivalent:
\begin{enumerate}[(i)]
\item   $\mmN$ is  injective.
\item $\det(J_c(f_{\kappa}))\neq 0$ for all $c\in \R^n_+$ and all rate vectors $\kappa$ satisfying $k_{S_i\rightarrow 0}=1$ for all $i\in \mathcal{O}$.
\item For any distinct $a,b\in \R^n_+$, $f_{\kappa}(a)\neq f_{\kappa}(b)$ for all  rate vectors $\kappa$ satisfying $k_{S_i\rightarrow 0}=1$ for all $i\in \mathcal{O}$.
\end{enumerate}
\end{proposition}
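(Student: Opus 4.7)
The plan is to exploit a rescaling symmetry of mass-action kinetics. For any diagonal matrix $\Lambda=\mathrm{diag}(\lambda_1,\dots,\lambda_n)$ with $\lambda_i>0$, set $\lambda^y=\prod_i\lambda_i^{y_i}$ and define a new rate vector $\kappa'$ by $\kappa'_{y\to y'}=\lambda^y k_{y\to y'}$. A direct computation with the definition of $f_\kappa$ gives the identity
\begin{equation*}
f_\kappa(\Lambda c)=\sum_{y\to y'\in\mmR} k_{y\to y'}(\Lambda c)^y(y'-y)=\sum_{y\to y'\in\mmR}\kappa'_{y\to y'}c^y(y'-y)=f_{\kappa'}(c),
\end{equation*}
and hence, by the chain rule, $J_c(f_{\kappa'})=J_{\Lambda c}(f_\kappa)\Lambda$ so that $\det(J_c(f_{\kappa'}))=\det(\Lambda)\det(J_{\Lambda c}(f_\kappa))$. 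For an outflow reaction $S_i\to 0$ the reactant is $e_i$, so $\lambda^{e_i}=\lambda_i$ and $\kappa'_{S_i\to 0}=\lambda_i k_{S_i\to 0}$. Choosing $\lambda_i=1/k_{S_i\to 0}$ for $i\in\mathcal{O}$ and $\lambda_i=1$ otherwise normalizes $\kappa'_{S_i\to 0}=1$ for all $i\in\mathcal{O}$ while keeping all other rate constants strictly positive.

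With this normalization in hand, the three equivalences are essentially formal. The implications (i)$\Rightarrow$(ii) and (i)$\Rightarrow$(iii) are immediate because (ii) and (iii) only quantify over a subclass of rate vectors. For (iii)$\Rightarrow$(i) I would proceed contrapositively: given distinct $a,b\in\R^n_+$ and $\kappa\in\R^\mmR_+$ with $f_\kappa(a)=f_\kappa(b)$, pick $\Lambda$ as above and set $a'=\Lambda^{-1}a$, $b'=\Lambda^{-1}b$; these remain distinct and positive, and the identity above yields $f_{\kappa'}(a')=f_\kappa(a)=f_\kappa(b)=f_{\kappa'}(b')$ with $\kappa'$ satisfying the constraint, violating (iii).

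For (ii)$\Rightarrow$(i), I would combine the same rescaling with Theorem~\ref{injecclose}: since $\mmN$ is open we have $\Gamma=\R^n$, so Theorem~\ref{injecclose} says injectivity is equivalent to $\det(J_c(f_\kappa))\neq 0$ for \emph{all} positive $c$ and \emph{all} positive $\kappa$. If this last condition fails at some $(x,\kappa)$, then by the Jacobian identity $\det(J_{\Lambda^{-1}x}(f_{\kappa'}))=\det(\Lambda)\det(J_x(f_\kappa))=0$, with $\Lambda^{-1}x\in\R^n_+$ and $\kappa'$ satisfying the normalization; hence (ii) fails as well.

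The only nontrivial step is the derivation and correct use of the rescaling identity $f_\kappa(\Lambda c)=f_{\kappa'}(c)$, and in particular tracking how the chain rule produces the factor $\det(\Lambda)$ in the Jacobian determinant. No genuine obstacle arises because the change of variables preserves positivity and bijectively maps rate vectors onto rate vectors satisfying the constraint; the whole argument is a one-to-one correspondence between counterexamples to (i) and counterexamples to (ii) or (iii).
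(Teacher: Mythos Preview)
Your proposal is correct and follows essentially the same approach as the paper. Both arguments hinge on the same diagonal rescaling: the paper sets $\widetilde z_i=z_i k_{S_i\to 0}$ for $i\in\mathcal{O}$ and $\widetilde k_{y\to y'}=k_{y\to y'}/\prod_{i\in\mathcal{O}}k_{S_i\to 0}^{y_i}$, which is exactly your $\Lambda^{-1}$ and $\kappa'$ up to inversion of the scaling matrix, yielding the identical key identity $f_{\widetilde\kappa}(\widetilde a)=f_\kappa(a)$; for the Jacobian part the paper works with the pointwise identity $J_{\widetilde a}(f_{\widetilde\kappa})(\widetilde\gamma)=J_a(f_\kappa)(\gamma)$ via \eqref{jacform}, whereas you obtain the equivalent determinant relation $\det(J_c(f_{\kappa'}))=\det(\Lambda)\det(J_{\Lambda c}(f_\kappa))$ directly from the chain rule, which is a minor cosmetic difference.
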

\begin{proof} (i) implies (ii) and (iii) by Definition \ref{injective} and Theorem \ref{injecclose}. We now prove that (ii) and (iii) separately imply (i). 
For any $\kappa\in \R^{\mmR}_+$ and $z\in \R^n$ define
  $\widetilde\kappa = (\widetilde k_{y\rightarrow y'})_{y\rightarrow y'}\in \R^{\mmR}_+$ and  $\widetilde z \in \R^n$ by:
 \begin{align*}
 \widetilde z_i &= \begin{cases} z_i k_{S_i\rightarrow 0}, & i\in \mathcal{O} \\  z_i & i\notin \mathcal{O}\end{cases}  &
   \widetilde k_{y\rightarrow y'} & = \frac{ k_{y\rightarrow y'}}{ \prod_{i\in \mathcal{O}} k_{S_i\rightarrow 0}^{y_i}}.
  \end{align*}
  Clearly $\widetilde k_{S_i\rightarrow 0} =1$ for all $i\in \mathcal{O}$, 
  $\widetilde\kappa_{y\rightarrow y'} (\widetilde a)^y = k_{y\rightarrow y'} a^y$, and further $y\ast_{a} \gamma = y \ast_{\widetilde{a}} \widetilde{\gamma}$ for any $\gamma\in \R^n$. It follows that
$f_{\widetilde\kappa}(\widetilde a) = f_{\kappa}(a)$, and, similarly, for any $\gamma\in \R^n$, 
$ J_{\widetilde a}(f_{\widetilde \kappa})(\widetilde\gamma) =  J_{a}(f_{\kappa})(\gamma)$. 
Therefore, if   $\mmN$ is not injective, then (1) there exists a rate vector $\kappa\in \R^{\mmR}_+$ and distinct $a,b\in \R^n_+$  such that $f_{\kappa}(a)=f_{\kappa}(b)$ and the construction of  $\widetilde \kappa\in \R^{\mmR}_+$ and $\widetilde a,\widetilde b\in \R^n_+$ as above implies that (iii) does not hold, and (2) there exist a rate vector $\kappa\in \R^{\mmR}_+$,  $a\in \R^n_+$  and $\gamma \in \R^n$, $\gamma\neq 0$, such that $J_{a}(f_{\kappa})(\gamma)=0$   and thus   a rate vector $\widetilde \kappa\in \R^{\mmR}_+$,  $\widetilde a\in \R^n_+$  and $\widetilde\gamma \in \R^n$ contradicting (ii).  
It follows that  (ii) and (iii) both  imply (i) and the proof is completed. \end{proof}

The simplifications in Propositions \ref{simplconc} and \ref{simplrate} cannot be performed  at the same time because it  would constrain the choice of the free variables $\kappa$ and $c$ too much. 

The next proposition is an extension of   \cite[Eqn. 3.15]{craciun-feinbergI}  where the statement is made for a specific class of open networks. However,  the proof works line by line for the class of all open networks. In fact it does not depend on whether the network is open or not, but all terms in the proposition are zero unless the network is open.  Notice however that our statement differs from the statement  by Craciun and Feinberg in \cite{craciun-feinbergI} in the sign $(-1)^n$, because they establish the Jacobian criterion on the Jacobian of $-f_{\kappa}$.  

Recall that $n$ is the number of species and note that $\det(J_c(f_{\kappa}))$ is a homogeneous polynomial of degree $n$ in the rate constants.
To simplify the notation  we introduce the following: for any set of $m$ reactions, $R=\{y^1\rightarrow y'^{1},\dots,y^m\rightarrow y'^{m}\}$, let \label{yGamma}
\begin{itemize}
\item  $\mmY(R)$ be the $n\times m$ matrix whose $i$-th column is $y^i$.
\item  $\Gamma(R)$ be the $n\times m$ matrix whose $i$-th column is $y^i-y'^i$.  
\item If $m=n$, define $\sigma(R)=(-1)^n \det(\mmY(R))\det(\Gamma(R))$. 
\end{itemize}

\begin{proposition}[\cite{craciun-feinbergI}]\label{coefs} Let $\mmN$ be an open network and let $R=\{y^1\rightarrow y'^{1},\dots,y^n\rightarrow y'^{n}\}$ be a set of $n$ reactions. Viewed as a polynomial in the rate vector $\kappa$,  the coefficient of the monomial $\prod_{i=1}^nk_{y^i\rightarrow y'^i}$ in $\det(J_{c}(f_{\kappa}))$ with $c\in \R^n_+$ is  
$$\alpha(R):= \sigma(R) c^{-\mathbf{1} + \sum_{i=1}^n y^i}.$$ 
In particular, the coefficient of the monomial $\prod_{i=1}^nk_{y^i\rightarrow y'^i}$ in $\det(J_{\bn}(f_{\kappa}))$ is  $\sigma(R)$.
\end{proposition}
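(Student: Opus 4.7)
The plan is a direct computation using multilinearity of the determinant. From \eqref{jacform}, taking $\gamma$ equal to the $j$-th standard basis vector shows that the $j$-th column of $J_c(f_\kappa)$ equals $\sum_{y\to y'\in\mmR} k_{y\to y'}\,c^y\,(y_j/c_j)\,(y'-y)$. Expanding the determinant by multilinearity in the $n$ columns gives
$$\det(J_c(f_\kappa)) = \sum_{(r_1,\ldots,r_n)\in\mmR^n} \prod_{j=1}^n \Bigl( k_{r_j}\, c^{y_{r_j}}\,\frac{(y_{r_j})_j}{c_j}\Bigr) \cdot \det\bigl([y^\prime_{r_j} - y_{r_j}]_{j=1}^n\bigr),$$
where for a reaction $r\colon y\to y'$ we write $y_r = y$ and $y^\prime_r = y'$, and the matrix on the right has $y^\prime_{r_j}-y_{r_j}$ as its $j$-th column.

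Now fix the set $R=\{y^1\to y^{\prime 1},\ldots,y^n\to y^{\prime n}\}$ and collect the coefficient of the monomial $\prod_{i=1}^n k_{y^i\to y^{\prime i}}$ in this expansion. Because the monomial is squarefree in the rate constants, only those ordered $n$-tuples $(r_1,\ldots,r_n)$ whose entries are the elements of $R$ in some order contribute; these are parametrized by $\tau\in S_n$ via $r_j = y^{\tau(j)}\to y^{\prime\tau(j)}$. The products $\prod_j c^{y^{\tau(j)}} = c^{\sum_i y^i}$ and $\prod_j c_j^{-1} = c^{-\mathbf{1}}$ are independent of $\tau$, so the coefficient factors as
$$c^{-\mathbf{1} + \sum_{i=1}^n y^i}\, \sum_{\tau\in S_n} \Bigl(\prod_{j=1}^n y^{\tau(j)}_j\Bigr) \det\bigl([y^{\prime\tau(j)} - y^{\tau(j)}]_{j=1}^n\bigr).$$

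It remains to evaluate this sum. Permuting the columns of $[y^{\prime i}-y^i]_{i=1}^n$ by $\tau$ multiplies the determinant by $\mathrm{sgn}(\tau)$, and since this matrix equals $-\Gamma(R)$ its determinant is $(-1)^n\det(\Gamma(R))$. Hence the last determinant is $\mathrm{sgn}(\tau)(-1)^n\det(\Gamma(R))$. Applied to $\mmY(R)$, whose $(j,i)$-entry is $y^i_j$, the Leibniz formula gives $\sum_\tau \mathrm{sgn}(\tau)\prod_j y^{\tau(j)}_j = \det(\mmY(R))$. Assembling,
$$c^{-\mathbf{1}+\sum_{i=1}^n y^i}\cdot(-1)^n\det(\mmY(R))\det(\Gamma(R)) = \sigma(R)\, c^{-\mathbf{1}+\sum_{i=1}^n y^i} = \alpha(R),$$
which is the first claim; setting $c=\mathbf{1}$ yields the second. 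The only step that needs care is sign bookkeeping: the factor $(-1)^n$ coming from rewriting $y^{\prime i}-y^i$ as $-(y^i-y^{\prime i})$ and the factor $\mathrm{sgn}(\tau)$ coming from the column reorderings combine to reproduce exactly the sign $(-1)^n$ built into the definition of $\sigma(R)$.
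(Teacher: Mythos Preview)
Your proof is correct. The paper does not supply its own argument for this proposition but simply cites \cite{craciun-feinbergI}, noting that the proof there carries over verbatim; your direct computation via column multilinearity and the Leibniz formula is precisely the standard route and matches what that reference does.
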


\begin{remark} The term 
$\alpha(R)$ is  a monomial in $c$: if $\sum_{i=1}^n y^i_j=0$, then $S_j$ has zero coefficient in $y^i$ for all $i$ and thus $0=\det(y^1,\dots,y^n)=\sigma(R)$.
If a reaction $y^i\rightarrow y'^i$ appears twice in a set $R$ then $\sigma(R)=0$. Therefore the degree of each $k_{y\rightarrow y'}$ in the polynomial  $\det(J_{c}(f_{\kappa}))$ is either zero or one.
\end{remark}

\begin{proposition}\label{openpos}
An open network  $\mmN$ is  injective if and only if the non-zero coefficients $\sigma(R)$   have the same sign for  all sets $R$ of $n$ reactions   and there exists at least one set for which $\sigma(R)\neq 0$.
\end{proposition}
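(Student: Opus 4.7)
My plan is to reduce the statement, via two results already proved, to a purely algebraic claim about a multilinear polynomial, and then invoke the observation about multilinear polynomials stated in the notation subsection \S 2.1.

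First, by Proposition~\ref{simplconc}, injectivity of the open network $\mmN$ is equivalent to the condition that $p(\kappa) := \det(J_{\bn}(f_{\kappa}))$ is non-zero for every $\kappa \in \R_+^{\mmR}$. Second, by Proposition~\ref{coefs} and the ensuing remark, we may expand
$$
p(\kappa) = \sum_{R} \sigma(R) \prod_{r \in R} k_r,
$$
where $R$ ranges over subsets of $\mmR$ of cardinality $n$ and each rate constant $k_r$ appears with degree at most one in every monomial. So the question is reduced to: for which multilinear polynomials $p$ in the variables $\kappa$ is $p(\kappa)\neq 0$ for every $\kappa\in \R^{\mmR}_+$?

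For the ``if'' direction, assume the non-zero coefficients $\sigma(R)$ share a common sign and at least one is non-zero. After possibly replacing $p$ by $-p$, this is a non-zero multilinear polynomial with only non-negative coefficients, so by the observation in \S 2.1 it is strictly positive on $\R_+^{\mmR}$; in particular it never vanishes, and $\mmN$ is injective. For the ``only if'' direction, assume $\mmN$ is injective, so $p$ is continuous and nowhere zero on the connected set $\R_+^{\mmR}$; hence $p$ has constant sign there, and WLOG $p>0$ throughout. The observation from \S 2.1 then forces every coefficient $\sigma(R)$ to be non-negative, and $p\not\equiv 0$ forces at least one $\sigma(R)$ to be strictly positive.

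The argument has essentially no hard step; the only subtle point is the sign-consistency argument based on the connectedness of $\R_+^{\mmR}$, which is what underpins the ``only if'' direction. Everything else is a direct application of Propositions~\ref{simplconc} and~\ref{coefs} together with the elementary fact, already recorded in the paper's notation section, that a non-zero polynomial whose variables each have degree $0$ or $1$ in every monomial is strictly positive on the positive orthant if and only if all of its coefficients are non-negative.
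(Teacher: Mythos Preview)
Your proof is correct and follows essentially the same route as the paper. Both reduce to the multilinear polynomial $p(\kappa)=\det(J_{\bn}(f_{\kappa}))$ via Proposition~\ref{simplconc} and Proposition~\ref{coefs}; the only cosmetic difference is that for the ``only if'' direction you invoke the multilinear-polynomial observation recorded in \S2.1 directly, whereas the paper re-derives that observation in situ by specializing the rate constants along a one-parameter family and using continuity on the connected domain $\R_+^{\mmR}$.
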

\begin{proof}
The reverse implication follows from Corollary \ref{extendedcor}. For the forward implication, assume that $\mmN$ is injective. Clearly, $\det(J_{\bn}(f_{\kappa}))\neq 0$ and thus there exists at least one set $R$ for which $\sigma(R)\neq 0$. 
Note that  $\det(J_{\bn}(f_{\kappa}))$ is a polynomial in $\kappa$ with total degree $n$ and   degree at most one in each variable $k_{y\rightarrow y'}$. Assume that there exist two coefficients $R_1,R_2$ satisfying $\sigma(R_1)>0$ and  $\sigma(R_2)<0$.  Set $k_{y\rightarrow y'}=1$ if $y\rightarrow y'\notin R_1$ and $k_{y\rightarrow y'}=k$ if $y\rightarrow y'\in R_1$, where  $k$ is a positive parameter. After this transformation, the monomials corresponding to sets of reactions $R\neq R_1$ have degree in $k$ strictly lower than $n$. Then $\det(J_{\bn}(f_{\kappa}))$ is a polynomial of degree $n$ in  $k$, with positive leading coefficient. It follows that if $k$ tends to $+\infty$, then $\det(J_{\bn}(f_{\kappa}))>0$. Symmetrically, using $R_2$ we can find rate constants for which 
 $\det(J_{\bn}(f_{\kappa}))<0$. Since $\det(J_{\bn}(f_{\kappa}))$ is continuous in a connected domain, there exists a rate vector for which $\det(J_{\bn}(f_{\kappa}))=0$, contradicting Proposition \ref{simplconc}.
\end{proof}

 The criterion in Proposition~\ref{openpos} is independent of the rate vector $\kappa$. The requirement that there exists at least one set $R$ for which $\sigma(R)\neq 0$ cannot be removed. Consider for example the  network $\mmN$  with set of reactions $\{A\rightarrow B, A\rightarrow 0\}$. The stoichiometric space has dimension $2$ and thus $\mmN$ is open. However, $\det(J_{c}(f_{\kappa}))= 0$ for all $\kappa,c$, since $c_B$ is not a variable of $f_{\kappa}$. This requirement is not imposed in \cite[Th. 3.3]{craciun-feinbergI} because it  holds  automatically for fully open networks: in fact, the set of reactions $\{S_1\rightarrow 0,\dots,S_n\rightarrow 0\}$  provides the non-zero coefficient $(-1)^n$ in the determinant expansion of the Jacobian. It is mentioned in  \cite[Rk 3.5]{craciun-feinbergI} that the requirement is necessary if the network is not fully open.

\section{Injective closed chemical reaction networks} \label{closed}
We would like to have a characterization of the coefficients of the polynomial $\det(J_{c}(\widetilde{f}_{\kappa}))$ for closed networks similar to that of Proposition \ref{coefs} and a characterization of injectivity similar to that of Proposition \ref{openpos}.  To this end we  consider the fully open network associated to a network and use the results of the previous section. 

Before getting into  technicalities we illustrate the idea with the futile cycle.   The futile cycle has no outflow reactions and therefore the fully open network has an extra reaction $S_i\rightarrow 0$ for all $i$. Let $g_{\kappa^o}$ denote the species formation rate function of the fully open futile cycle with $\kappa^o$ any rate vector such that $k_{S_i\rightarrow 0}=1$. Then, the system of ODEs of the fully open futile cycle taken with mass-action kinetics is:
\begin{align*}
\dot{c_{1}} &=  - k_1 c_{1}c_3+ (k_2+k_3)c_5-c_1   & \dot{c_{2}} &= - k_4 c_{2}c_4 +(k_5+k_6)c_6-c_2  \\
\dot{c_3} &= - k_1c_{1}c_3+ k_2c_5+k_6c_6-c_3  & \dot{c_4} &=-k_4 c_{2}c_4 +k_3c_5+k_6c_6-c_4   \\
\dot{c_5} &=  k_1  c_{1}c_3-(k_2+k_3)c_5-c_5    & \dot{c_6} &=  k_4 c_2c_4-(k_5+ k_6)c_6-c_6. 
\end{align*}
The Jacobian $J_{c}(g_{\kappa^o})$ of $g_{\kappa^o}$ is 
$$
\left(\begin{array}{cccccc}
-k_1  c_3-1 & 0 & -k_1 c_{1} & 0 & k_2+k_3 & 0 \\  0 & -k_4c_4-1 & 0 &  -k_4c_2 & 0 &  k_5+ k_6  \\   - k_1c_3 & 0 &  - k_1c_{1}-1 & 0 & k_2 & k_6 \\  0 &   -k_4c_4 & 0 &   -k_4 c_{2}-1 & k_3 & k_6 \\ k_1  c_3 & 0 & k_1 c_{1} & 0 & -k_2-k_3-1 & 0 \\  0 & k_4c_4 & 0 &  k_4c_2 & 0 &  -k_5- k_6-1 
\end{array}\right).
$$
Propositions \ref{simplrate} and \ref{coefs} link the properties of the coefficients of the determinant of $J_{c}(g_{\kappa^o})$  to the injectivity of the fully open futile cycle. The determinant of $J_{c}(g_{\kappa^o})$ does not change if the fifth row is added to the first, the sixth row to the second, and the fourth, fifth and sixth to the third. Thus,
$$
\det(J_{c}(g_{\kappa^o})) = - \left|\begin{array}{cccccc}
1 & 0 & 0 & 0 & 1 & 0 \\ 0 & 1 & 0 & 0 & 0 & 1 \\  0 & 0 & 1 & 1 & 1 & 1 \\  0 &   -k_4c_4 & 0 &   -k_4 c_{2}-1 & k_3 & k_6 \\ k_1  c_3 & 0 & k_1 c_{1} & 0 & -k_2-k_3-1 & 0 \\  0 & k_4c_4 & 0 &  k_4c_2 & 0 &  -k_5- k_6-1 
\end{array}\right|
$$
where the sign $-$ in front corresponds to changing the sign of the first three rows. This  determinant is  almost equal to the determinant of $J_c(\widetilde{f}_{\kappa})$ as one can see from Example~\ref{futile:jac}. The difference between the two determinants arises from the $-1$ in the diagonal entries of the matrix for the rows $4,5,6$. Therefore, by splitting column $4$ 
using $(0,0,1,-k_4c_2-1,0,k_4c_2)=(0,0,1,-k_4c_2,0,k_4c_2)+(0,0,0,-1,0,0)$,
 and similarly for columns $5,6$, we have
$$
\det(J_{c}(g_{\kappa^o})) = -\det(J_c(\widetilde{f}_{\kappa}))  + \textrm{ monomials of total degree at most 2 in }k_1,\dots,k_6. 
$$
The determinant $\det(J_c(\widetilde{f}_{\kappa}))$ is a homogeneous polynomial in $k_1,\dots,k_6$ of  degree $s=3$. Thus, it agrees with the terms in $-\det(J_{c}(g_{\kappa^o}))$ of  total degree $3$:
\begin{align*} 
 -\det(J_{c}(g_{\kappa^o})) &=(c_1c_2+c_1c_4)k_1k_3k_4 + c_1 (k_1 k_3 k_5+k_1 k_3 k_6 ) + (c_1c_2+c_2c_3) k_1 k_4 k_5 \\ & + c_2 (k_2 k_4 k_5 +k_3 k_4 k_5 ) 
 +  (c_1+ c_3) (k_1 k_6+k_1 k_5) + k_2 k_6  + k_3 k_6 +  k_2 k_5   \\ &  + k_3 k_5 + c_2 k_4 k_5 +(c_2+ c_4)(k_2 k_4 +k_3 k_4)    + (c_1 +c_3)(c_2+ c_4) k_1 k_4  \\ &+ c_1 k_1 k_3
+(c_1+ c_3) k_1 + k_2 + k_3  + (c_2+ c_4) k_4  + k_5+ k_6 +1.
\end{align*}
The fully open network is injective since all coefficients of minus the determinant expansion are positive.

In the example, the row modifications done prior to the computation of the determinant were based on the fact that  the futile cycle has conservation laws. This principle holds generally for any closed network.  Let $\mmN^o$  be the fully open network associated with a closed network $\mmN=(\mmS,\mmC,\mmR)$ with stoichiometric space $\Gamma$. Let $\mathcal{O}(\mmN) := \{i| S_i\rightarrow 0\notin \mmR\}$ be the set of indices for which species outflow reactions do not belong to $\mmR$. 
Then $\mmR^o=\mmR \cup \{S_i\rightarrow 0|\, i\in \on\}$. 
For example, if $\mmN$ is the   futile cycle then $\on=\{1,2,3,4,5,6\}$.
If  $\{\omega^1,\dots,\omega^d\}$ is a reduced basis of  $\Gamma^{\perp}$  then by Lemma \ref{nocons} we have that $i\in \mathcal{O}(\mmN)$ for $i=1,\dots,d$. Thus, the cardinality of $\on$ is at least $d$.

We start by relating the species formation rate functions of $\mmN$ and $\mmN^o$.
Given a rate vector $\kappa=(k_{y\rightarrow y'})_{y\rightarrow y'}\in \R^{\mmR}_+$, define the associated  rate vector $\kappa^o=(k_{y\rightarrow y'}^o)_{y\rightarrow y'}\in \R^{\mmR^o}_+$ by setting $k_{S_i\rightarrow 0}=1$ if $i\in \on$ and $k_{y\rightarrow y'}^o=k_{y\rightarrow y'}$ for $y\rightarrow y'\in \mmR$. 

Let $\delta_i=1$  if $i\in \on$ and $\delta_i=0$ otherwise, and let $E_{m}^n$ be the $n\times n$ matrix with zeroes everywhere but $\delta_i$ in the diagonal entries $(i,i)$ for $i=m,\dots,n$.
Then, the species formation rate function $g_{\kappa^o}(c)$ of $\mmN^o$ with rate vector $\kappa^o$  is  $$g_{\kappa^o}(c)=f_{\kappa}(c) -(\delta_1c_1,\dots,\delta_nc_n).$$

\begin{theorem}\label{prop:det} 
Let $\mmN$ be a closed network,  $\{\omega^1,\dots,\omega^d\}$ a reduced basis of  $\Gamma^{\perp}$ and $\widetilde{f}_{\kappa}(c)$  the associated extended rate function. 
For a rate vector $\kappa\in \R^{\mmR}_+$, let $\kappa^o$, $g_{\kappa^o}(c)$, $E^n_m$ 
 and $\delta_i$ be defined as above.  Then,
 $$\det(J_c(g_{\kappa^o}))=(-1)^d \det(J_{c}(\widetilde{f}_{\kappa}) -  E_{d+1}^n).$$ 
 \end{theorem}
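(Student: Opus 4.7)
The plan is to transform $J_c(g_{\kappa^o})$ into $J_c(\widetilde{f}_\kappa) - E^n_{d+1}$ by a sequence of elementary row operations that account exactly for a sign $(-1)^d$, using the relations between the rows of $J_c(f_\kappa)$ supplied by $\Gamma^\perp$.

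First, I would write $J_c(g_{\kappa^o}) = J_c(f_\kappa) - D$, where $D$ is the diagonal matrix with entries $\delta_1,\dots,\delta_n$; this is immediate from $g_{\kappa^o}(c) = f_\kappa(c) - (\delta_1 c_1,\dots,\delta_n c_n)$. Note that for $i=1,\dots,d$ we have $i\in\on$ (by Lemma~\ref{nocons} applied to the reduced basis, since $\lambda^i_i=1$ forces $S_i\to 0\notin\mmR$), so $\delta_i=1$. Then I compare rows with the target matrix: the last $s$ rows of $J_c(\widetilde{f}_\kappa) - E^n_{d+1}$ are exactly $J_{c,j}(f_\kappa) - \delta_j e_j^T$ for $j=d+1,\dots,n$, matching those of $J_c(g_{\kappa^o})$. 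So only the first $d$ rows need to be adjusted.

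The key step is to perform the row operation that, for each $i=1,\dots,d$, replaces row $i$ of $J_c(g_{\kappa^o})$ by row $i$ plus $\sum_{j=d+1}^n \lambda^i_j$ times row $j$. This does not change the determinant. Applying Lemma~\ref{omegajac} to the reduced basis (where $\lambda^i_i=1$ and $\lambda^i_k=0$ for $k\le d$, $k\ne i$) yields $J_{c,i}(f_\kappa) + \sum_{j=d+1}^n \lambda^i_j J_{c,j}(f_\kappa)=0$, so the $f_\kappa$-contributions cancel and the new row $i$ becomes
\[
-e_i^T - \sum_{j=d+1}^n \lambda^i_j\,\delta_j\, e_j^T.
\]
Here is where Lemma~\ref{nocons} enters again: if $j\notin\on$ (so $\delta_j=0$) we would need $\lambda^i_j(1-\delta_j)=\lambda^i_j$ to vanish, and indeed $S_j\to 0\in\mmR$ forces $\lambda^i_j=0$. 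Hence $\lambda^i_j\,\delta_j=\lambda^i_j$ in every case, and the new row $i$ equals $-(\omega^i)^T$.

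After these row operations the matrix has first $d$ rows $-(\omega^i)^T$ and last $s$ rows $J_{c,j}(f_\kappa)-\delta_j e_j^T$; multiplying the first $d$ rows by $-1$ produces $J_c(\widetilde{f}_\kappa) - E^n_{d+1}$ and contributes the factor $(-1)^d$, giving the claimed identity. I do not expect any real obstacle: the only subtle point is verifying that the extra terms $\lambda^i_j(1-\delta_j)e_j^T$ actually vanish, and Lemma~\ref{nocons} is tailored to this. Bookkeeping the reduced-basis hypothesis (that rows $1,\dots,d$ of $\Gamma^\perp$ correspond to indices in $\on$) is the only thing that could easily be mishandled.
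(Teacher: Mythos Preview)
Your proof is correct and essentially identical to the paper's: the paper packages your row operations into a single matrix $P$ (with rows $-\omega^i$ for $i\le d$ and $e_i$ for $i>d$) and computes $PJ_c(g_{\kappa^o})=J_c(\widetilde{f}_\kappa)-E_{d+1}^n$ with $\det(P)=(-1)^d$, which is exactly your sequence of row additions followed by sign changes on the first $d$ rows. The key uses of Lemma~\ref{omegajac} and Lemma~\ref{nocons} are the same in both arguments.
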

\begin{proof}
Since $g_{\kappa^o}(c)=f_{\kappa}(c) -(\delta_1c_1,\dots,\delta_nc_n)$, we have that $J_c(g_{\kappa^o}) = J_c(f_{\kappa}) -  E_{1}^n$. 
We let $\omega^i=(\lambda_1^i,\dots,\lambda_n^i)$. If $\delta_j=0$, then $S_j\rightarrow 0\in \mmR$ and it follows from Lemma \ref{nocons} that $\lambda^i_j=0$ for all $i$. Thus,   
$ (\omega^i)^tE_{1}^n=(\delta_1\lambda_1^i,\dots,\delta_n\lambda_n^i)= \omega^i$
and from Lemma \ref{omegajac}
$$(\omega^i)^t J_c(g_{\kappa^o})  = (\omega^i)^t J_c(f_{\kappa})  - (\omega^i)^t  E_{1}^n = -\omega^i,\qquad \textrm{for }i=1,\dots,d.$$
  Let $P$ be the $n\times n$ matrix  whose $i$-th row is  $-\omega^i$  for $i=1,\dots,d$   and the $i$-th canonical vector $e_i$ for $i=d+1,\dots,n$.
By the choice of $\omega^i$, $\det(P)=(-1)^d$. Further, $P  J_c(g_{\kappa^o})=J_{c}(\widetilde{f}_{\kappa}) - E_{d+1}^n$ and hence, 
$$(-1)^d \det(J_c(g_{\kappa^o}))= \det(PJ_c(g_{\kappa^o}))= \det(J_{c}(\widetilde{f}_{\kappa}) - E_{d+1}^n).$$\end{proof}

\begin{corollary}\label{coeffs}
The determinant expansion of $J_c(\widetilde{f}_{\kappa})$ as a polynomial in $\kappa$ agrees with the terms in the determinant expansion of $(-1)^d\det(J_c(g_{\kappa^o}))$ of total degree $s$.
\end{corollary}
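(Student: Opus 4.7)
The key leverage is Theorem~\ref{prop:det}, which already states
\[
(-1)^d \det(J_c(g_{\kappa^o})) \;=\; \det\bigl(J_{c}(\widetilde{f}_{\kappa}) - E_{d+1}^n\bigr),
\]
so the whole task is to read off which terms on the right-hand side have total degree exactly $s=n-d$ in $\kappa$, and show that the degree-$s$ part coincides with $\det(J_c(\widetilde{f}_\kappa))$.

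The plan is to exploit multilinearity of the determinant applied row by row. First I would recall the structure of $J_c(\widetilde{f}_\kappa)$: by Definition~\ref{extended}, its first $d$ rows are the constant vectors $\omega^1,\dots,\omega^d$ (independent of both $c$ and $\kappa$), while its last $n-d=s$ rows are the rows of $J_c(f_\kappa)$ indexed by $d+1,\dots,n$, whose entries are linear in $\kappa$. This is exactly why, as already noted in the excerpt, $\det(J_c(\widetilde{f}_\kappa))$ is homogeneous of degree $s$ in $\kappa$. The matrix $E_{d+1}^n$ only contributes to rows $d+1,\dots,n$, adding a constant $-\delta_i$ on the diagonal.

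Next I would expand $\det(J_c(\widetilde{f}_\kappa)-E_{d+1}^n)$ by writing each of its last $s$ rows as a sum: the row of $J_c(\widetilde{f}_\kappa)$ (linear in $\kappa$) plus a constant row coming from $-E_{d+1}^n$. Multilinearity of the determinant then produces $2^s$ terms, one for each choice, for each of the last $s$ rows, between the $\kappa$-linear part and the constant part. In the unique term where every last row is taken to be the $\kappa$-linear part, we recover exactly $\det(J_c(\widetilde{f}_\kappa))$, homogeneous of degree $s$ in $\kappa$. In every other term, at least one of the last rows has been replaced by a constant row from $-E_{d+1}^n$, so that term is a determinant in which strictly fewer than $s$ rows carry a $\kappa$; hence its total $\kappa$-degree is at most $s-1$.

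Collecting, we get
\[
\det(J_c(\widetilde{f}_\kappa)-E_{d+1}^n) \;=\; \det(J_c(\widetilde{f}_\kappa)) \;+\; (\text{polynomial of total }\kappa\text{-degree} \le s-1),
\]
and combining with Theorem~\ref{prop:det} finishes the proof. I do not expect any real obstacle here: the only thing to be careful with is that the first $d$ rows of $J_c(\widetilde{f}_\kappa)$ carry no $\kappa$ and are untouched by $E_{d+1}^n$, so they are \emph{not} split and do not inflate the degree count; once this is observed, the multilinearity argument is immediate.
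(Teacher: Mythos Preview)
Your argument is correct and follows essentially the same approach as the paper: multilinear expansion of $\det(J_c(\widetilde{f}_\kappa)-E_{d+1}^n)$ to separate the degree-$s$ piece from the lower-degree remainder. The only cosmetic difference is that the paper expands by columns (splitting column $i$, for $i\in\{d+1,\dots,n\}$, into the original column of $J_c(\widetilde{f}_\kappa)$ and $-\delta_i e_i$) whereas you expand by rows; your row expansion is arguably the more natural choice, since the $\kappa$-degree is carried row-wise, but the two are interchangeable here.
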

\begin{proof}
For any non-empty set $I\subseteq \{d+1,\dots,n\}$, let $J_{c}^I(\widetilde{f}_{\kappa})$ be the matrix whose $i$-th column equals that of $J_{c}(\widetilde{f}_{\kappa})$ for $i\notin I$ and is the vector   $-\delta_ie_i$ for $i\in I$. Then, by the column multilinear expansion of the determinant, we have  
$$
(-1)^d\det(J_c(g_{\kappa^o}))= \det(J_{c}(\widetilde{f}_{\kappa})) +\sum_{ \emptyset\neq I\subseteq \{d+1,\dots,n\}} \det(J_{c}^I(\widetilde{f}_{\kappa})).
$$
If $c$ is fixed then  $\det(J_{c}(\widetilde{f}_{\kappa}))$ is  a homogeneous polynomial in $\kappa$ of total degree $s$, while the terms $\det(J_{c}^I(\widetilde{f}_{\kappa}))$ are polynomials in $\kappa$ of total degree strictly lower than $s$.  
\end{proof}

Let $\mmN$ be a network. Let $\mmR_s$ be the set of all sets of $n$ reactions formed by the union of a set $R$ containing $s$ reactions in $\mmR$ and $d=n-s$ outflow reactions $S_i\rightarrow 0$ with $i\in \on$.   Since the cardinality of $\on$ is at least $d$, $\mmR_s\neq \emptyset$. 
By Corollary~\ref{coeffs}  and Proposition~\ref{coefs}   the coefficients of  $\det(J_c(\widetilde{f}_{\kappa}))$  as a polynomial in $\kappa$ and $c$ are, up to a sign, $\alpha(R')$ for $R'\in \mmR_s$.  If $i_j$, $j=1,\dots,d$ are the indices for which outflow reactions $S_{i_j}\rightarrow 0$ belong to $R'$, then $\det(\mmY(R'))$ and $\det(\Gamma(R'))$ are simply  the minors of $\mmY(R')$ and $\Gamma(R')$ with the $i_j$-th rows and columns removed. Equivalently, $\det(\mmY(R'))$ and $\det(\Gamma(R'))$ are the minors obtained by removing    the $i_j$-th rows   from $\mmY(R)$ and $\Gamma(R)$, respectively, for $R=R'\cap\mmR$.  The matrices  $\mmY(R)$ and $\Gamma(R)$ are introduced on page~\pageref{yGamma}.

 Let $\mathcal{O}_{d}(\mmN)$ be the set of subsets of $\on$ of cardinality $d$. 
If $M$ is any $n\times s$ matrix and $I\in \mathcal{O}_{d}(\mmN)$, let $M_{I}$ denote the $s\times s$ submatrix of $M$ obtained by removing the $j$-th row for all $j\in I$. The following corollary is a consequence of Proposition \ref{coefs} and the discussion above.

\begin{corollary}\label{coefs2} Let $\mmN$ be a network and $s$ be the dimension of the stoichiometric space. The terms in the expansion of the determinant $\det(J_{c}(\widetilde{f}_{\kappa}))$ are monomials in $\kappa$ of total degree $s$ and linear in each rate constant. Further, let $R=\{y^1\rightarrow y'^{1},\dots,y^s\rightarrow y'^{s}\}$ be a set of $s$ reactions from $\mmN$. The coefficient of the monomial $\prod_{i=1}^sk_{y^i\rightarrow y'^i}$ in $\det(J_{c}(\widetilde{f}_{\kappa}))$ for $c\in \R^n_+$ is  
$$(-1)^{s} c^{-\mathbf{1} + \sum_{i=1}^s y^i} \sum_{I\in  \mathcal{O}_{d}(\mmN)} \det(\mmY(R)_{I})\det(\Gamma(R)_{I}) \prod_{i\in I} c_{i}.$$ 
Alternatively, the coefficient of $\prod_{i=1}^sk_{y^i\rightarrow y'^i}$ can be written as
$$(-1)^{s}  c^{-\mathbf{1} + \sum_{i=1}^s y^i}  \sum_{R'\in \mmR_s, R =R'\cap \mmR} \sigma(R') \prod_{S_i\rightarrow 0\in R'\setminus R}c_{i}.$$
\end{corollary}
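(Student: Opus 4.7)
The plan is to combine the two immediately preceding results: Corollary~\ref{coeffs} identifies the coefficient of $\prod_{i=1}^s k_{y^i\rightarrow y'^i}$ in $\det(J_{c}(\widetilde{f}_{\kappa}))$ with the corresponding coefficient in $(-1)^d\det(J_{c}(g_{\kappa^o}))$, while Proposition~\ref{coefs} gives the explicit form $\alpha(R')=\sigma(R')\,c^{-\mathbf{1}+\sum y}$ for the coefficient of $\prod_{r\in R'}k_r$ in $\det(J_{c}(g_{\kappa^o}))$ whenever $R'$ is a set of $n$ reactions from $\mmN^o$. Since $\kappa^o$ fixes every outflow rate constant $k_{S_i\rightarrow 0}$ with $i\in\mathcal{O}(\mmN)$ to $1$, the sets $R'$ that contribute to the monomial $\prod_{i=1}^s k_{y^i\rightarrow y'^i}$ are exactly those in $\mmR_s$ with $R'\cap\mmR=R$, i.e.\ $R'=R\cup\{S_i\rightarrow 0:i\in I\}$ for some $I\in\mathcal{O}_{d}(\mmN)$. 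The degree and linearity statements then come for free: degree $s$ is built into Corollary~\ref{coeffs}, and linearity in each $k_{y\rightarrow y'}$ follows from the remark after Proposition~\ref{coefs} that $\sigma(R')=0$ as soon as a reaction is repeated in $R'$.

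Next I would compute the $c$-monomial attached to each $I\in\mathcal{O}_{d}(\mmN)$. The reactant of $S_i\rightarrow 0$ is the canonical vector $e_i$, so the exponent $-\mathbf{1}+\sum_{r\in R'}y(r)$ appearing in $\alpha(R')$ equals $-\mathbf{1}+\sum_{i=1}^s y^i+\sum_{i\in I}e_i$, giving the factor $c^{-\mathbf{1}+\sum_{i=1}^s y^i}\prod_{i\in I}c_i$. Summing $(-1)^d\sigma(R')$ against these monomials over $I\in\mathcal{O}_{d}(\mmN)$ immediately yields the second expression stated in the corollary.

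For the first expression, I would unpack $\sigma(R')=(-1)^n\det(\mmY(R'))\det(\Gamma(R'))$ by simultaneous Laplace expansions along the $d$ outflow columns. Both $\mmY(R')$ and $\Gamma(R')$ carry the same columns $e_i$ for $i\in I$ (since for an outflow reaction $y=y-y'=e_i$); using the same column permutation in both matrices to bring these columns to the right-hand side, each determinant factors through a block-triangular form and reduces, up to a sign $\varepsilon_I$, to the determinant of the $s\times s$ submatrix obtained by deleting from $\mmY(R)$ (resp.~$\Gamma(R)$) the rows indexed by $I$. Because the two column permutations are identical, the sign $\varepsilon_I$ is the same in both factors and its square equals $1$, so $\det(\mmY(R'))\det(\Gamma(R'))=\det(\mmY(R)_I)\det(\Gamma(R)_I)$. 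Then $(-1)^d\sigma(R')=(-1)^{n+d}\det(\mmY(R)_I)\det(\Gamma(R)_I)=(-1)^s\det(\mmY(R)_I)\det(\Gamma(R)_I)$ since $n=s+d$, and summing over $I\in\mathcal{O}_{d}(\mmN)$ delivers the first form.

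The main obstacle I anticipate is the sign bookkeeping in the simultaneous Laplace expansion: one must verify that the permutation used to shuffle the outflow columns is genuinely the same in $\mmY(R')$ and $\Gamma(R')$ (which is automatic because the two matrices index their columns by the same ordered list of reactions), and that the resulting submatrices are exactly $\mmY(R)_I$ and $\Gamma(R)_I$ as defined in the paper, i.e.\ row-minors indexed by $I$ rather than column-minors. Beyond this bookkeeping, the proof is simply the assembly of Corollary~\ref{coeffs} with Proposition~\ref{coefs}.
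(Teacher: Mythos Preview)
Your proposal is correct and follows the same approach as the paper: the corollary is presented there (without a separately delimited proof) as a direct combination of Corollary~\ref{coeffs} and Proposition~\ref{coefs}, together with the observation in the paragraph preceding the statement that $\det(\mmY(R'))$ and $\det(\Gamma(R'))$ reduce to the row-minors $\det(\mmY(R)_I)$ and $\det(\Gamma(R)_I)$. Your treatment of the sign via a common column permutation in both matrices is slightly more explicit than the paper's, which simply asserts the reduction to minors; otherwise the arguments coincide.
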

 Observe that the vector $\sum_{i=1}^s y^i$ is simply the row sum of the matrix $\mmY(R)$. The corollary reduces to Proposition~\ref{coefs} if $\mmN$ is open.

 \begin{remark} Only the rate constants of outflow reactions that are \emph{not} in $\mmN$, i.e. those in $\on$, are set to one in the associated  rate vector $\kappa^o$. Otherwise the determinant of $J_{c}(\widetilde{f}_{\kappa})$ would not be a homogenous polynomial of total degree $s$ in $\kappa$.  
\end{remark}

\begin{example}
Consider the futile cycle and \eqref{ex:jac}. The coefficient of $k_1k_3k_4$ in $-\det(J_{c}(\widetilde{f}_{\kappa}))$ is  $c_1c_2+c_1c_4$. It corresponds to the reactions $R=\{r_1,r_3,r_4\}$ and the matrices  $\mmY(R)$ and $\Gamma(R)$ are:
 $$\mmY(R) = \left(\begin{array}{ccc} 1 & 0 & 0 \\ 0 & 0 & 1 \\ 1 & 0 & 0 \\ 0 & 0 & 1 \\ 0 & 1 & 0 \\ 0 & 0 & 0 \end{array}\right)\qquad \qquad
\Gamma(R) = \left(\begin{array}{ccc} 1 & -1 & 0 \\ 0 & 0 & 1 \\ 1 & 0 & 0 \\ 0 & -1 & 1 \\ -1 & 1 & 0 \\ 0 & 0 & 1 \end{array}\right). $$ 
The only sets of indices $I\in \mathcal{O}_{3}(\mmN)$   for which the product $\det(\mmY(R)_{I})\det(\Gamma(R)_{I})$ is non-zero are $I=\{1,2,6\}$ and $I=\{1,4,6\}$. These sets give the coefficient  $c_1c_2+c_1c_4$. Since the last row of $\mmY(R)$ is zero, the index $6$ belongs to all  index sets $I$ for which $\det(\mmY(R)_{I})\det(\Gamma(R)_{I})\neq 0$.
\end{example}

\begin{corollary}\label{det-criterion}
Let $\mathcal{N}=(\mmS,\mmC,\mmR)$ be a network. The following are equivalent:
\begin{enumerate}[(i)]
\item $\mmN$ is injective.
\item The non-zero coefficients $\sigma(R')$ have the same sign for all sets $R'\in \mmR_s$,   and $\sigma(R')\neq 0$ for at least one  set $R'$.
\item The non-zero products $\det(\mmY(R)_{I})\det(\Gamma(R)_{I})$ have the same sign for all sets $R$ of $s$ reactions in $\mmR$ and  $I\in \mathcal{O}_{d}(\mmN)$, and further $\det(\mmY(R)_{I})\det(\Gamma(R)_{I})\neq  0$ for at least one set $R$ and some $I\in \mathcal{O}_{d}(\mmN)$.
\end{enumerate}
\end{corollary}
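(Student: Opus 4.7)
The plan is to combine Corollary \ref{extendedcor} with the coefficient formula of Corollary \ref{coefs2} to translate injectivity into a sign-definite condition on the coefficients of $\det(J_c(\widetilde f_\kappa))$ viewed as a polynomial in $c$ and $\kappa$, and then adapt the dominating-monomial argument from Proposition \ref{openpos} to the closed setting. The equivalence (ii) $\Leftrightarrow$ (iii) is immediate: $\mmR_s$ is in bijection with pairs $(R,I)$, where $R = R'\cap\mmR$ has cardinality $s$ and $I = \{i\in\mathcal{O}(\mmN)\mid S_i\rightarrow 0\in R'\setminus\mmR\}\in \mathcal{O}_d(\mmN)$. A Laplace expansion of $\det(\mathcal{Y}(R'))$ and $\det(\Gamma(R'))$ along the $d$ columns corresponding to the added outflow reactions (each contributing a standard basis vector $e_i$ to both matrices) yields $\sigma(R') = (-1)^n \det(\mathcal{Y}(R)_I)\det(\Gamma(R)_I)$, so the sign and non-vanishing conditions in (ii) and (iii) differ only by the common factor $(-1)^n$.

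For (ii) $\Rightarrow$ (i), I would expand $\det(J_c(\widetilde f_\kappa))$ as a polynomial in $c,\kappa$ using Corollary \ref{coefs2}. The essential observation is that distinct $R'\in\mmR_s$ produce distinct $(c,\kappa)$-monomials: the $\kappa$-part $\prod_{i=1}^s k_{y^i\rightarrow y'^i}$ uniquely determines $R=R'\cap\mmR$, and with $R$ fixed the squarefree factor $\prod_{i\in I} c_i$ multiplied by the common $c^{-\mathbf{1}+\sum y^i}$ uniquely determines $I$. Hence each $(c,\kappa)$-monomial carries a single coefficient proportional to $\sigma(R')$. If all non-zero $\sigma(R')$ share a common sign and at least one is non-zero, then $\det(J_c(\widetilde f_\kappa))$ evaluated at any $(c,\kappa)\in\R^n_+\times\R^{\mmR}_+$ is a sum of non-zero same-sign terms, hence non-vanishing; Corollary \ref{extendedcor} then gives injectivity.

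For (i) $\Rightarrow$ (ii), assume $\mmN$ is injective. Then $\det(J_c(\widetilde f_\kappa))$ is not identically zero, so some $\sigma(R')$ must be non-zero. If there existed $R'_1, R'_2\in \mmR_s$ with $\sigma(R'_1)>0$ and $\sigma(R'_2)<0$, I would derive a contradiction by a two-parameter scaling. Write $R'_1 = R_1\cup\{S_i\rightarrow 0\mid i\in I_1\}$ and set $k_{y\rightarrow y'} = t$ for $y\rightarrow y'\in R_1$ and $1$ otherwise, and $c_i = M$ for $i\in I_1$ and $c_i=1$ otherwise. Since $|R\cap R_1| < s$ for any $R\neq R_1$ of size $s$, only $R=R_1$ contributes a term of full degree $s$ in $t$, so $\det(J_c(\widetilde f_\kappa)) = t^s P_{R_1}(c) + O(t^{s-1})$ as $t\to\infty$, where $P_{R_1}$ is the coefficient of the corresponding $\kappa$-monomial. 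At the chosen $c$, $P_{R_1}$ is a polynomial in $M$ whose leading term as $M\to\infty$ corresponds to $I=I_1$, because $|I\cap I_1| < d$ for any other $I\in\mathcal{O}_d(\mmN)$, and has sign $(-1)^d\sigma(R'_1)$. The symmetric construction applied to $R'_2$ produces the opposite sign, and continuity of $\det(J_c(\widetilde f_\kappa))$ on the connected open set $\R^n_+\times\R^{\mmR}_+$ forces a zero in between, contradicting Corollary \ref{extendedcor}. The main obstacle is precisely this last step: the single-parameter scaling used in Proposition \ref{openpos} isolates only $R$, so an additional parameter $M$ in the concentrations is needed to isolate the pair $(R,I)$; the argument succeeds thanks to the elementary inequality $|I\cap I_1|<|I_1|$ for distinct $d$-element subsets of $\mathcal{O}(\mmN)$.
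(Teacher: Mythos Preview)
Your proof is correct and follows essentially the same route as the paper: the equivalence (ii)$\Leftrightarrow$(iii) comes from the identification of $\sigma(R')$ with $(-1)^n\det(\mathcal{Y}(R)_I)\det(\Gamma(R)_I)$ (the paper simply invokes Corollary~\ref{coefs2}), (ii)$\Rightarrow$(i) is the sign-definite determinant argument via Corollary~\ref{extendedcor}, and (i)$\Rightarrow$(ii) is the dominating-monomial construction from Proposition~\ref{openpos} adapted to the closed setting. The only cosmetic difference is the parametrization in the last step: the paper uses a single parameter $k$ (with $k_{y\to y'}=k$ on $R$, $k_{y\to y'}=1/k^m$ off $R$, and $c_l=k$ for the outflow indices in $R'$), whereas you decouple concentrations and rate constants into two parameters $M$ and $t$ taken to infinity sequentially; both isolate the same monomial and the argument is otherwise identical.
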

\begin{proof} 
The equivalence between (ii) and (iii) is a consequence of Corollary \ref{coefs2}. If (ii) holds then (i) is a consequence of  Corollary \ref{coefs2} and Corollary \ref{extendedcor}. To show that (i) implies (ii), we use the same argument as in the proof of Proposition \ref{openpos}. Using Corollary \ref{coefs2} and Corollary \ref{extendedcor} it suffices to show that for any set $R'\in \mmR_s$ we can find a concentration vector $c$ and a  rate vector $\kappa$  for which the sign of $\det(J_{c}(\widetilde{f}_{\kappa}))$ agrees with the sign of $(-1)^s \sigma(R')$. 
Let $k$ be a positive parameter and let $m$ be an integer. Let $R=R'\cap \mmR$. Define $k_{y\rightarrow y'}=k$ if $y\rightarrow y'\in R$, and $k_{y\rightarrow y'}=1/k^m$ if $y\rightarrow y'\notin R$. Define $c_l=k$ if $S_l\rightarrow 0\in R$ and $c_l=1$ otherwise.  Then the monomial corresponding to the set $R'\in \mmR_s$  is the only monomial that  tends to $\pm \infty$ when $k$ tends to infinity for $m$ large enough.
\end{proof}

\begin{example}
We consider the extension of the futile cycle to incorporate two modification sites instead of one.  
The network consists of the  reactions 

\vspace{0.1cm}
 \centerline{\xymatrix@R=10pt{
S_{1} + S_3 \ar@<0.3ex>[r]^(0.6){k_1}  & S_5  \ar@<0.3ex>[l]^(0.4){k_2}  \ar[r]^(0.4){k_3} & S_{1} + S_4 \ar@<0.3ex>[r]^(0.6){k_4}  & S_7 \ar@<0.3ex>[l]^(0.4){k_5}  \ar[r]^(0.4){k_6} & S_1+S_8   \\
S_{2} + S_8 \ar@<0.3ex>[r]^(0.6){k_{7}}  & S_9  \ar@<0.3ex>[l]^(0.4){k_{8}}  \ar[r]^(0.4){k_{9}}  & S_{2} + S_4  \ar@<0.3ex>[r]^(0.6){k_{10}}  & S_6 \ar@<0.3ex>[l]^(0.4){k_{11}}  \ar[r]^(0.4){k_{12}} & S_2+S_3
}}

\noindent with rate constants indicated next to each reaction.
The enzyme $S_1$ catalyzes the modification of $S_3$ to $S_4$ and subsequently to $S_8$ via the formation of the intermediates $S_5,S_7$. Similarly, the enzyme $S_2$ catalyzes the demodification of $S_8$ to $S_4$ and then to $S_3$ via the intermediates $S_9$ and $S_6$. 
A reduced basis of $\Gamma^{\perp}$ of this network is given by the vectors $S_1+S_5+S_7$,  $S_2+S_6+S_9$, and$S_3+S_4+S_5+S_6+S_7+S_8+S_9$. 
The extended rate function is obtained by substituting the components of $f_{\kappa}$ with indices $1,2$, and $3$ by $c_1+c_5+c_7$, $c_2+c_6+c_9$, and $c_3+c_4+c_5+c_6+c_7+c_8+c_9$, respectively.

The determinant of the extended rate function has the monomials $$k_1 k_3 k_4 k_7 k_9  k_{12} c_1 c_2 c_3\qquad \textrm{and}\qquad - k_2 k_4 k_6 k_7 k_{10}k_{12} c_1 c_2 c_4.$$  
The two terms have different signs.  Therefore, this network is not injective. Note that the degree of the monomials in the rate constant is $s=6$.

It is well known that this network can exhibit multistationarity for some choices of rate constants and total amounts (\cite{Markevich-mapk}). More generally, in \cite{Feliu:royal} Feliu and Wiuf analyzed the occurrence of multistationarity in different smalls motifs accounting for enzyme sharing in protein modification, including the futile cycle and the two-site modification cycle. In their examples, all motifs that admit exactly one positive steady state for any total amounts are in fact injective. The motifs that can admit multiple positive steady states are obviously not injective.
\end{example}

\begin{example}\label{ex:noninj}
Being injective is not a necessary condition for the existence of at most one positive steady state within each stoichiometric class.
Consider the network with reactions
 
 \centerline{
 \xymatrix@C=12pt@R=2pt{
r_1\colon A+B \ar[r]  & C & r_2\colon C \ar[r] & A+B &  r_5\colon A \ar[r] & A+E & r_7\colon B\ar[r] & 0 \\  r_3\colon D+E \ar[r]  & F  & r_4\colon F \ar[r] & D+E   & r_6\colon D\ar[r] & B+D & r_8\colon E \ar[r] & 0. &}}

\vspace{0.1cm}
\noindent
For $R_1=\{r_2,r_4,r_7,r_8\}$, we have $\sigma(R_1)=1$, while for  $R_2=\{r_1,r_3,r_5,r_6\}$, we have $\sigma(R_2)=-1$. It follows that the network is not injective. However, by solving the steady-state equations together with the equations for the conservation laws, it is easily seen that there is exactly  one positive steady state in each stoichiometric class.
\end{example}

\section{Networks with all steady states degenerate}\label{sec8}
If $\det(J_{c}(\widetilde{f}_{\kappa}))$ is not identically zero then  there exist $s$ linearly independent reaction vectors   $y^1- y'^{1},\dots,y^s- y'^{s}$  such that  $y^1,\dots,y^s$ are also linearly independent.
Therefore, if the dimension of the vector space $$\mmY:=\langle y\in \mmC| y\textrm{ is the reactant complex of some reaction }y\rightarrow y' \rangle \subseteq \R^n$$ is strictly smaller than $s$ then $\det(J_{c}(\widetilde{f}_{\kappa}))=0$. For example, if the reactant complexes of a network involve at most $s-1$ species, then all steady states of the network are degenerate.  The network with reactions $\{A\rightarrow B,A\rightarrow 0\}$ satisfies $\dim \mmY=1<2$, consistent with our computation that $\det(J_{c}(\widetilde{f}_{\kappa}))=\det(J_c(f_{\kappa}))=0$.

Let $I\in \mathcal{O}_{d}(\mmN)$ and let $\mmS_{I} =\{S_i\in \mmS|\ i\notin I\}$ be the set of species with indices not in $I$. Note that $\mmS_I$ has cardinality $s$. We consider the \emph{projection} of the network $\mmN$ to the set of species $\mmS_{I}$, $\mmN_I$,   induced by the projection $\pi_I: \R^n\rightarrow \R^s$ on the coordinates not in $I$.
 For example, the projection of the futile cycle with $I=\{1,2,6\}$ is the network  with species $\mmS_I=\{S_3,S_4,S_5\}$ and reactions 
 
\centerline{\xymatrix{
  S_3 \ar@<0.3ex>[r]  & S_5  \ar@<0.3ex>[l]  \ar[r] &  S_4  &
 S_4 \ar@<0.3ex>[r]  & 0  \ar@<0.3ex>[l]  \ar[r]  &  S_3.
}}
\noindent If $0\rightarrow 0$ occurs then the reaction is discarded and like-wise redundant reactions are removed. The matrix $\Gamma(R)_I$ varies  over all sets of $s$ reactions in $\mmN_I$ as $R$ varies. Similarly, $\mmY(R)_I$ varies over all sets of $s$ reactant complexes in $\mmN_I$ as $R$ varies.  Therefore, the requirement that $\det(\mmY(R)_{I})\det(\Gamma(R)_{I})\neq  0$ for some $R$   is equivalent to  the existence of $s$ independent reactions in $\mmN_I$ such that the corresponding reactant vectors also are independent.  Since $\mmN_I$ has $s$ species, a necessary condition is that $\mmN_I$ is open, that is, the stoichiometric space has dimension $s$.  In the example above, the set of reactions $\{S_3\rightarrow S_5,S_5\rightarrow S_4,S_4\rightarrow 0\}$ are independent and so are the reactant complexes. This implies (as also shown in Example~\ref{futile:jac}) that the steady states of the futile cycle are not degenerate.

The following corollary is a consequence of Corollary \ref{coefs2}, Corollary \ref{jaccomp} and the preceding discussion.

\begin{corollary}\label{deg}
Let $\mathcal{N}=(\mmS,\mmC,\mmR)$ be a network. The following statements are equivalent:
\begin{enumerate}[(i)]
\item $\ker(J_{c}(f_{\kappa}))\cap \Gamma \neq \{0\}$ for all $\kappa\in \R^{\mmR}_+$ and $c\in \R^n_+$.
\item $\det(\mmY(R)_{I})\det(\Gamma(R)_{I})= 0$ for all sets  $R$ of $s$ reactions  in $\mmR$ and $I\in \mathcal{O}_{d}(\mmN)$.
\item For all $I\in \mathcal{O}_{d}(\mmN)$ and for any set of $s$ reactions $y^1\rightarrow y'^{1},\dots,y^s\rightarrow y'^{s}$ in the projected network $\mmN_I$, if the vectors  $y^1- y'^{1},\dots,y^s- y'^{s}$ are linearly independent then the complexes $y^1,\dots,y^s$ are linearly dependent.
\end{enumerate}
If any of these hold then $\mmN$ is not injective and  any steady state is degenerate.
\end{corollary}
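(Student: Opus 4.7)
The plan is to route everything through the extended rate function $\widetilde f_\kappa$ and then apply the explicit formula for the coefficients of $\det(J_c(\widetilde f_\kappa))$ from Corollary~\ref{coefs2}. First I would translate (i): by Corollary~\ref{jaccomp}, $\ker(J_c(f_\kappa))\cap\Gamma\neq\{0\}$ for every $\kappa\in\R^{\mmR}_+$ and $c\in\R^n_+$ is equivalent to $\det(J_c(\widetilde f_\kappa))=0$ for all such $\kappa$ and $c$. Since $\det(J_c(\widetilde f_\kappa))$ is a polynomial in the variables $(c,\kappa)$, the general fact recalled in the notation section (a polynomial vanishing on all positive assignments vanishes identically) lets me replace (i) with the identical vanishing of this polynomial.

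Next I would invoke Corollary~\ref{coefs2}, which writes the coefficient of the $\kappa$-monomial $\prod_{i=1}^s k_{y^i\to y'^i}$ in $\det(J_c(\widetilde f_\kappa))$ as
\[
(-1)^s c^{-\mathbf 1+\sum_i y^i}\sum_{I\in\mathcal O_d(\mmN)}\det(\mmY(R)_I)\det(\Gamma(R)_I)\prod_{i\in I}c_i.
\]
The key combinatorial observation is that different pairs $(R,I)$ produce \emph{distinct} monomials in $(c,\kappa)$: the $\kappa$-part fixes $R$, and once $R$ is fixed, the exponent vector of $c$ in the $I$-summand is $-\mathbf 1+\sum_i y^i+\mathbf 1_I$, so different $I$ give different exponent patterns. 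Therefore the identical vanishing of $\det(J_c(\widetilde f_\kappa))$ is equivalent to the vanishing of every single term $\det(\mmY(R)_I)\det(\Gamma(R)_I)$, which is exactly (ii). This gives (i)$\Leftrightarrow$(ii).

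For (ii)$\Leftrightarrow$(iii) I would use the interpretation discussed right before the statement: for fixed $I\in\mathcal O_d(\mmN)$, the matrix $\mmY(R)_I$ (respectively $\Gamma(R)_I$) is the reactant complex matrix (respectively stoichiometric matrix) of the set $R$ of $s$ reactions viewed in the projected network $\mmN_I$. Hence $\det(\mmY(R)_I)\det(\Gamma(R)_I)=0$ precisely when either the projected reaction vectors $y^i-y'^i$ are linearly dependent or the projected reactant complexes $y^i$ are linearly dependent. Reading this over all $R$ and $I$ and contrapositively on the $\Gamma(R)_I$ side yields (iii).

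Finally, for the closing assertion, if any (hence all) of (i)--(iii) hold then $\det(J_c(\widetilde f_\kappa))\equiv 0$; by Corollary~\ref{extendedcor} this means $\mmN$ is not injective, and by Definition~\ref{def:deg} applied to a positive steady state $c^*$, the condition $\ker(J_{c^*}(f_\kappa))\cap\Gamma\neq\{0\}$ is precisely the definition of degeneracy. The only nontrivial step is the ``distinct monomials'' observation in the second paragraph; everything else is bookkeeping that repackages Corollaries~\ref{jaccomp} and~\ref{coefs2} together with the projection interpretation of $\mmY(R)_I$ and $\Gamma(R)_I$.
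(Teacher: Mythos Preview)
Your proof is correct and follows the same route as the paper: the paper simply states that the corollary is a consequence of Corollary~\ref{coefs2}, Corollary~\ref{jaccomp}, and the discussion of projected networks preceding the statement, which is precisely your chain of reasoning. Your explicit ``distinct monomials'' observation for the pairs $(R,I)$ makes transparent a step the paper leaves implicit (compare the domination argument in the proof of Corollary~\ref{det-criterion}); one minor point is that since you establish $\det(J_c(\widetilde f_\kappa))\equiv 0$ identically, Corollary~\ref{jaccomp} gives degeneracy of \emph{every} steady state $c^*\in\R^n$, not only positive ones, so you need not restrict to positive $c^*$ in the last paragraph.
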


If $\mmN$ is open, then $d=0$ and the only projection to consider is the identity. Therefore, condition (iii) reduces to the condition of the following corollary.

\begin{corollary}\label{deg2}
Let $\mathcal{N}=(\mmS,\mmC,\mmR)$ be an open network. Then $\ker(J_{c}(f_{\kappa}))\cap \Gamma \neq \{0\}$ for all $\kappa\in \R^{\mmR}_+$ and $c\in \R^n_+$ if and only if for any set of $n$ reactions $y^1\rightarrow y'^{1},\dots,y^n\rightarrow y'^{n}$ such that the vectors  $y^1- y'^{1},\dots,y^n- y'^{n}$ are linearly independent, the complexes $y^1,\dots,y^n$ are linearly dependent.
\end{corollary}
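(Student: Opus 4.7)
The plan is to deduce this corollary directly from Corollary~\ref{deg} by specializing it to the open case. When $\mmN$ is open, the stoichiometric subspace satisfies $\Gamma=\R^n$, so $s=n$ and $d=n-s=0$. Consequently the index set $\mathcal{O}_d(\mmN)=\mathcal{O}_0(\mmN)$ consists of a single element, $I=\emptyset$, and for this $I$ the matrices $\mmY(R)_I$ and $\Gamma(R)_I$ reduce to $\mmY(R)$ and $\Gamma(R)$ themselves (removing zero rows), while the projected network $\mmN_I$ coincides with $\mmN$.

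With these identifications in place, I would rewrite condition (iii) of Corollary~\ref{deg}: it becomes the requirement that for any set of $n$ reactions $y^1\to y'^1,\dots,y^n\to y'^n$ in $\mmR$, linear independence of the reaction vectors $y^1-y'^1,\dots,y^n-y'^n$ forces linear dependence of the reactant complexes $y^1,\dots,y^n$. This is exactly the condition stated in the present corollary. Since condition (i) of Corollary~\ref{deg} is, verbatim, ``$\ker(J_c(f_\kappa))\cap\Gamma\neq\{0\}$ for all $\kappa\in\R^{\mmR}_+$ and $c\in\R^n_+$'', the equivalence in the current statement follows at once.

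The only real step is bookkeeping: verifying that $\mathcal{O}_0(\mmN)=\{\emptyset\}$ regardless of $\on$, and that the convention for removing an empty set of rows or projecting onto the full coordinate set leaves everything unchanged. No genuine obstacle arises; the corollary is a transparent specialization of the preceding one once these conventions are in place.
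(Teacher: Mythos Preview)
Your proposal is correct and matches the paper's own argument exactly: the paper simply notes that when $\mmN$ is open one has $d=0$, so the only projection is the identity, and condition (iii) of Corollary~\ref{deg} reduces to the stated condition. There is nothing to add.
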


If $\mmN$ is fully open then the set of species outflow reactions provides a set of independent reaction vectors and independent reactant complexes. Therefore, fully open networks cannot fulfill that $\ker(J_{c}(f_{\kappa}))\cap \Gamma \neq \{0\}$ for all $\kappa\in \R^{\mmR}_+$ and $c\in \R^n_+$. However,  open networks that are not fully open might fulfill the condition.  For example, consider  the network with reactions 
$$r_1\colon S_1 \rightarrow S_3\qquad  r_2\colon S_3 \rightarrow S_1\qquad r_3\colon   S_1+S_2 \rightarrow  S_3 \qquad r_4\colon 2S_1+2S_2 \rightarrow  S_3.
$$
The dimension of $\mmY$ and $\Gamma$ agree ($s=3$) and the network is open. The network has no outflow reactions. Reactions $r_1,r_2$ are linearly dependent and thus any set of $3$ independent reactions must contain $r_3$ and $r_4$. In that case, however, the reactant complexes are linearly dependent. It follows that Corollary \ref{deg2}(ii) is fulfilled and hence all steady states of the network are degenerate.

\begin{remark} If $\mathcal{Y}\subsetneq\Gamma$, in which case $\dim(\mmY)<s$ and Corollary~\ref{deg}(ii) is fulfilled, then all stoichiometric classes have  either none or infinitely many positive steady states: let $a\in\R^n_+$ be a steady state. Then there is a vector $\gamma\in\Gamma$ such that $\gamma\cdot y=0$ for all $y\in\mathcal{Y}$. Consequently,  for all $t\in\R$ and  $\mu\in\Gamma^{\perp}\subseteq\mathcal{Y}^{\perp}$  we have $e^{y\cdot(t\gamma+\mu)}=1$ and hence $c_{t,\mu}=a e^{t\gamma+\mu}$ is a steady state. Further,  $c_{t,\mu}\in G_t=\{ c\in\R^n_+| \log(c)-\log(ae^{t\gamma})\in\Gamma^{\perp}\}$ which intersects each stoichiometric class in exactly one point for each $t$ \cite{Feinbergss}. It is easy to prove that $G_{t}\cap G_{t'}=\emptyset$ if $t\neq t'$ and hence there are infinitely many positive steady states in each stoichiometric class.
\end{remark}

 \begin{remark} In \cite{craciun-feinberg-semiopen}, a related determinant criterion is given to decide whether or not a closed network can admit degenerate steady states.  Fix a determinant function $\det_{\Gamma}$ on the  stoichiometric space $\Gamma$. For each choice of scalar product $*_c$ in equation \eqref{jacform} (that is, for each choice of  $c\in\R_+^n$)  non-singularity of $J_c(f_{\kappa})$ restricted to $\Gamma$  is related to a polynomial expansion in $\kappa$ of  $\det_{\Gamma}(J_c(f_{\kappa}))$. Each term in the determinant expansion takes the form in Corollary~\ref{det-criterion} with `$\det$' replaced by $\det_{\Gamma}$ and $y\in\mathcal{Y}(R)_I$ replaced by $\pi_c(y)$, where $\pi_c$ is the projection onto $\Gamma$ as defined by the scalar product $*_c$. The criterion requires the coefficients of the terms in the determinant expansion to be of  the same sign or zero for each choice of $c\in\R^n_+$ \cite[Prop. 10.3]{craciun-feinberg-semiopen}. It is not obvious how to check whether this criterion is fulfilled using computational algebra software, in contrast to the criterion in Proposition~\ref{det-criterion}.
\end{remark}

\begin{remark} For fully open networks, an algorithm is provided Joshi and Shiu in \cite{joshi-shiu-I} to simplify the search for coefficients of the determinant of the Jacobian that have the ``wrong'' sign. The algorithm can be applied in the present setting, that is, to closed networks, to restrict  the sets of $s$ reactions to consider.
\end{remark}

\begin{remark}
The projected networks are  \emph{embedded networks} as defined by  Joshi and Shiu in \cite{joshi-shiu-II}. We have shown that  for an injective network all  embedded networks obtained by selecting sets of $s$ species as above are either injective or have all steady states degenerate. Consequently, if a network is injective then the embedded networks with $s$ species do not have the capacity to admit multiple non-degenerate steady states.
\end{remark}

\section{Open and closed networks and injectivity}\label{sec9}
In \cite{craciun-feinberg}, Craciun and Feinberg  preclude multistationarity in  closed networks provided that (1) the fully open network is injective and (2) the closed network does not have degenerate steady states. In a later paper \cite[Th. 8.2]{craciun-feinberg-semiopen}, the authors provide a sufficient condition (namely that the \emph{entrapped-species projection is a normal reaction network}) for (2) to hold provided (1) holds as well. All weakly reversible networks fulfill this condition. 

 Using the results of the previous sections we now relate injectivity of a network  $\mmN$ and injectivity of the associated fully open network $\mmN^o$.

\begin{theorem}\label{th:closeinj}
Let $\mathcal{N}=(\mmS,\mmC,\mmR)$ be a closed network.
If $\mathcal{N}^o$ is injective then the following statements are equivalent:
\begin{enumerate}[(i)]
\item $\mmN$ is injective.
\item As a polynomial in $\kappa$, $\det(J_c(g_{\kappa^o}))$  has at least one monomial of total degree $s$.
\item $\ker(J_{c}(f_{\kappa}))\cap \Gamma = \{0\}$ for some fixed $\kappa\in \R^{\mmR}_+$ and $c\in \R^n_+$.
\item There exists $I\in \mathcal{O}_{d}(\mmN)$ and a set of $s$ reactions $y^1\rightarrow y'^{1},\dots,y^s\rightarrow y'^{s}$ in the projected network $\mmN_I$ such that the two sets of vectors  $y^1- y'^{1},\dots,y^s- y'^{s}$ and $y^1,\dots,y^s$ are both linearly independent.
\end{enumerate}
\end{theorem}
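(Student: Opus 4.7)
The plan is to chain together results from Sections~\ref{sec:degenerate}--\ref{sec8}, using the hypothesis that $\mmN^o$ is injective to rule out sign cancellations in the determinant expansion. The key preliminary observation is that, by Corollary~\ref{det-criterion} applied to $\mmN^o$, the non-zero coefficients $\sigma(R')$ for sets $R'$ of $n$ reactions in $\mmR^o$ all share a common sign. Since every $R'\in \mmR_s$ is such a set, the same common sign persists for the coefficients of $\det(J_c(g_{\kappa^o}))$ contributing monomials of total degree exactly $s$ in $\kappa$.

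For $(i)\Leftrightarrow (ii)$: by Corollary~\ref{coeffs}, $\det(J_c(\widetilde{f}_{\kappa}))$ equals the degree-$s$ part (in $\kappa$) of $(-1)^d \det(J_c(g_{\kappa^o}))$. By Corollary~\ref{det-criterion} applied to $\mmN$, injectivity of $\mmN$ is equivalent to the non-zero coefficients $\sigma(R')$, $R'\in \mmR_s$, sharing a common sign together with the existence of at least one non-zero such coefficient. Since the common-sign requirement is already forced by injectivity of $\mmN^o$, what remains is exactly the existence of a degree-$s$ monomial of $\det(J_c(g_{\kappa^o}))$, which is $(ii)$.

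For $(i)\Leftrightarrow (iii)$: the forward direction is immediate from Theorem~\ref{injecclose}, which gives $\ker(J_c(f_{\kappa}))\cap\Gamma=\{0\}$ for all positive $c$ and $\kappa$. For the converse, Corollary~\ref{jaccomp} translates $(iii)$ into $\det(J_c(\widetilde{f}_{\kappa}))\neq 0$ at that fixed pair; this forces the polynomial to be not identically zero, hence it has a non-zero monomial of degree $s$, which is $(ii)$, and the previous step delivers $(i)$.

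For $(ii)\Leftrightarrow (iv)$: by Corollary~\ref{coefs2}, the coefficient in $\det(J_c(\widetilde{f}_{\kappa}))$ of a monomial $\prod_{i=1}^s k_{y^i\rightarrow y'^i}$ is, up to sign and a monomial in $c$, the polynomial $\sum_{I\in \mathcal{O}_d(\mmN)}\det(\mmY(R)_I)\det(\Gamma(R)_I)\prod_{i\in I}c_i$. The preliminary observation forces each term $\det(\mmY(R)_I)\det(\Gamma(R)_I)$ to be either zero or of the common sign, so no cancellation can occur and the sum is non-zero as a polynomial in $c$ precisely when some individual summand is non-zero. This is exactly the linear-independence condition in $(iv)$, via the reformulation preceding Corollary~\ref{deg}. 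The main obstacle, and the place where the hypothesis on $\mmN^o$ is indispensable, is this no-cancellation step: without sign alignment, different $I$'s could produce terms that cancel, and then $(ii)$ and $(iv)$ would decouple.
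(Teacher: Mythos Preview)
Your proof is correct and follows essentially the same approach as the paper: the injectivity of $\mmN^o$ forces the dichotomy that $\det(J_c(\widetilde{f}_\kappa))$ is either identically zero or nowhere zero on $\R^n_+\times\R^{\mmR}_+$, after which (i)--(iv) all reduce to which case occurs via Theorem~\ref{injecclose}, Corollary~\ref{extendedcor}, Corollary~\ref{coeffs}, and Corollary~\ref{deg}. One small correction to your commentary: the equivalence $(ii)\Leftrightarrow(iii)\Leftrightarrow(iv)$ in fact holds \emph{unconditionally} (this is Corollary~\ref{deg} in contrapositive, since distinct $I\in\mathcal{O}_d(\mmN)$ yield distinct $c$-monomials $\prod_{i\in I}c_i$ and hence no cancellation can occur regardless of signs); the hypothesis on $\mmN^o$ is indispensable only for attaching $(i)$ to the chain, not for the no-cancellation step you flagged.
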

\begin{proof} Let $\{\omega^1,\dots,\omega^d\}$ be a reduced basis of  $\Gamma^{\perp}$ and $\widetilde{f}_{\kappa}$ the associated extended rate function for $\kappa\in \R^{\mmR}_+$. Since $\mmN^o$ is injective, then $J_c(g_{\kappa^o})$ is non-singular for all $\kappa\in \R^{\mmR}_+$ and for any $c\in \R^n_+$ (Proposition \ref{simplrate}). Equivalently,  $\det(J_c(g_{\kappa^o}))$ is a polynomial in $\kappa$, with all non-zero coefficients having the same sign and thus all non-zero coefficients of $\det(J_{c}(\widetilde{f}_{\kappa}))$ have the same sign. It follows that either (1) $\det(J_{c}(\widetilde{f}_{\kappa}))=0$ as a polynomial in $\kappa$ or (2) $\det(J_{c}(\widetilde{f}_{\kappa}))\neq 0$ for all values of $\kappa\in \R^{\mmR}_+$ and $c\in \R^n_+$.
The equivalence of the four statements follows from this observation,   Theorem \ref{injecclose}, Corollary \ref{extendedcor} and Corollary \ref{deg}.
\end{proof}

\begin{corollary}
Let $\mathcal{N}=(\mmS,\mmC,\mmR)$ be a closed network, $\mmN^o=(\mmS,\mmC^o,\mmR^o)$ the associated fully open network, and $g_{\kappa}$ the species formation rate function of $\mmN^o$.  Assume  that  $\mmN^o$ is injective. Then $\mmN$ is injective  if and only if  as a polynomial in  $\kappa=(k_{y\rightarrow y'})_{y\rightarrow y'\in \mmR^o}$, $\det(J_{\bn}(g_{\kappa}))$  
has at least one monomial in $k_{y^1\rightarrow y'^{1}},\dots,k_{y^n\rightarrow y'^{n}}$ 
with $s$ reactions in $\mmR$. If this is the case, then all steady states of $\mmN$  are non-degenerate.
\end{corollary}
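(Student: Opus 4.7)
The plan is to deduce the corollary from Theorem~\ref{th:closeinj} specialized at $c=\mathbf{1}$, by recasting condition~(ii) of that theorem in terms of the polynomial $\det(J_{\bn}(g_\kappa))$ in all rate constants of $\mmR^o$.

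First I would observe that the vector $\kappa^o$ appearing in Theorem~\ref{th:closeinj} is obtained from $\kappa\in\R_+^{\mmR}$ by adjoining $k_{S_i\to 0}=1$ for each $i\in\on$, so $\det(J_{\bn}(g_{\kappa^o}))$ is the specialization of $\det(J_{\bn}(g_\kappa))$ under this substitution. By Proposition~\ref{coefs} applied to the open (in fact fully open) network $\mmN^o$, the polynomial $\det(J_{\bn}(g_\kappa))$ is homogeneous of total degree $n$ in $\kappa$, with monomials indexed by sets $R$ of $n$ reactions in $\mmR^o$ and coefficients $\sigma(R)$. The substitution sends the monomial corresponding to $R$ to a monomial in $\kappa^o$ of total degree $|R\cap\mmR|$. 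Hence the monomials of $\det(J_{\bn}(g_{\kappa^o}))$ of total degree exactly $s$ arise precisely from those $R$ with $|R\cap\mmR|=s$ (equivalently, $d$ outflow reactions drawn from $\mmR^o\setminus\mmR$).

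Next I would address potential cancellation. Two distinct sets $R_1,R_2\subseteq\mmR^o$ of size $n$ satisfying $R_1\cap\mmR=R_2\cap\mmR$ collapse to the same monomial in $\kappa^o$ after the substitution, and their coefficients add up. However, since $\mmN^o$ is assumed injective, Proposition~\ref{openpos} forces all non-zero $\sigma(R)$ to share a common sign, so no cancellation occurs. Therefore condition~(ii) of Theorem~\ref{th:closeinj}, the existence of a monomial of total degree $s$ in $\det(J_{\bn}(g_{\kappa^o}))$, is equivalent to the existence of a set $R=\{y^1\to y'^1,\dots,y^n\to y'^n\}$ of $n$ reactions in $\mmR^o$ with $|R\cap\mmR|=s$ and $\sigma(R)\neq 0$; that is to say, to the existence of a monomial $\prod_{i=1}^n k_{y^i\to y'^i}$ in $\det(J_{\bn}(g_\kappa))$ with precisely $s$ of its factors coming from $\mmR$. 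Combined with the equivalence (i)$\Leftrightarrow$(ii) of Theorem~\ref{th:closeinj}, this yields the stated characterization.

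Finally, when $\mmN$ is injective, non-degeneracy of every positive steady state follows immediately from Corollary~\ref{inj-deg}. The essential technical point is the no-cancellation argument in the substitution step: without the injectivity of $\mmN^o$ one could have coefficients of opposite sign collapsing after setting $k_{S_i\to 0}=1$ for $i\in\on$, and the equivalence would break down. The hypothesis that $\mmN^o$ is injective, via Proposition~\ref{openpos}, is precisely what rules this out.
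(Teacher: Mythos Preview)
Your proof is correct and follows essentially the route the paper intends: the corollary is stated without proof immediately after Theorem~\ref{th:closeinj}, and your argument spells out precisely how condition~(ii) there, phrased for $\kappa^o$, is equivalent to the monomial condition for the unspecialized $\det(J_{\bn}(g_\kappa))$. Your explicit treatment of the no-cancellation step (via Proposition~\ref{openpos} and the injectivity of~$\mmN^o$) is the key point that the paper leaves implicit, and it is handled correctly.
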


\begin{example} Consider the open network $\mmN$  with reactions $r_1\colon A+B\rightarrow D$, $r_2\colon A\rightarrow C$, $r_3\colon B\rightarrow D$. The dimension of the stoichiometric space is $3$. Let $k_i$ denote the rate constant of reaction $r_i$. The associated fully open network is injective,
since 
$$\det(J_c(g_{\kappa^o})) = 1 + k_2 + k_3 + k_1 c_A  +k_1c_B+ k_2 k_3 + k_1 k_2 c_A + k_1 k_3 c_B.$$
However, the determinant    has no monomial involving $k_1,k_2,k_3$. It follows that $\mmN$ is not injective and  all steady states are degenerate. Alternatively, note that $\mmY=\langle A,B\rangle$ has dimension $2<s=3$. 
\end{example}

In view of these results, there might exist injective networks such  that their open network counterparts are not injective.  This can only occur if some monomials with less than $s$ rate constants from the true reactions have the wrong sign.  
We provide two examples.

\begin{example}
Consider the network $\mmN$ given by the reactions:
$$r_1\colon A+C\rightarrow 2A+B,\quad r_2\colon B+C\rightarrow A+C,\quad r_3\colon A+B+C\rightarrow 2B.$$
The stoichiometric space of $\mmN$ has maximal dimension $3$ and thus it is open. Let $k_i$ denote the rate constant of reaction $r_i$, and $k_A,k_B,k_C$ the rate constants of the outflow reactions. Then, for $\kappa=(k_1,k_2,k_3,k_A,k_B,k_C)$, 
\begin{multline*}
\det(J_{\mathbf{1}}(g_{\kappa}))= -2k_1k_2k_3 - k_1k_2k_A- k_1k_2k_B-k_2k_3k_B - k_1k_Ak_B -k_3k_Ak_B
+ 2 k_1k_2k_C \\- 2k_1k_3k_C  - k_2k_Ak_C + k_3k_Ak_C + k_1k_Bk_C -k_3k_Bk_C -k_Ak_Bk_C.
\end{multline*}
Since the sign of the coefficients of $\det(J_{\mathbf{1}}(g_{\kappa}))$ as a polynomial in $\kappa$ differ,   the fully open network, $\mmN^o$, is not injective. However, there is  one monomial only in $k_1,k_2,k_3$, which implies that $\mmN$ is injective and has no degenerate steady states.  This network, however, does not have positive steady states either.
\end{example}

\begin{example}
Consider the  network $\mmN$ given by the reactions:
$$r_1\colon A+B+C\rightarrow 2A+B+2C,\quad r_2\colon A+C\rightarrow A+B+C,\quad r_3\colon C\rightarrow A+B+ 2C.$$
The stoichiometric space of this network has dimension $2$ and thus it is closed. Let $k_i$ be the rate constant of reaction $r_i$ and fix the rate constants of the outflow reactions to $k_A=k_B=k_C=1$. Then, for $\kappa^o=(k_1,k_2,k_3,1,1,1)$, 
\begin{multline*}
\det(J_{c}(g_{\kappa^o}))= k_1 k_3 c_Ac_C +k_1 k_2 c_A^2c_C +  k_1 k_2 c_A c_C^2 + k_3 + k_1 c_Ac_B  + k_1 c_Bc_C -1.
 \end{multline*}
 We see that $\mmN^o$ is not injective but since  all monomials in $k_1,k_2,k_3$ of total degree 2 have  the same sign, it follows from corollaries \ref{coeffs}, \ref{coefs2} and \ref{det-criterion}  that $\mmN$ is injective and there are no degenerate steady states.
 \end{example}

\begin{remark} The open network given in \cite[$\S$6]{craciun-feinberg} to illustrate that degenerate steady states can occur is not injective. Therefore the results of this work do not apply.
\end{remark}

\begin{remark} This remark is for the readers familiar with  \cite{craciun-feinberg-semiopen}  and the terminology introduced in that paper. Let $\mmN$ be a network such that $\mmN^o$ is injective. We have provided a sufficient and necessary condition for $\mmN$ to be injective as well, namely, that there exists $I\in \mathcal{O}_{d}(\mmN)$, and a set of $s$ reactions $y^1\rightarrow y'^{1},\dots,y^s\rightarrow y'^{s}$ in the projected network $\mmN_I$ such that the set of vectors  $y^1- y'^{1},\dots,y^s- y'^{s}$ and $y^1,\dots,y^s$ are both linearly independent. Since it is a sufficient and necessary condition, any network fulfilling the condition in \cite[Th. 8.2]{craciun-feinberg-semiopen} for $\mmN$ to be injective, that is,   the  entrapped-species projection is a normal reaction network, also fulfills our condition. In particular,  weakly reversible networks are normal and hence they also fulfill our condition.
\end{remark}

\section{Weakly Sign Determined (WSD) networks}\label{sec10}
A square matrix is said to be a $P$-matrix if all principle minors are positive. If the principle minors are non-negative the matrix is said to be a $P_0$-matrix.  In \cite{Banaji-donnell}, a criterion (namely that the \emph{stoichiometric matrix  is WSD}) is introduced that relates to injectivity of a network $\mmN$.  They restrict the class of networks to \emph{non-autocatalytic networks} (NAC), which implies that the same species cannot both be a reactant and a product in the same reaction. We impose the same constraint in this section.

In our notation the criterion states  that $(*)$ {$\det(\mathcal{Y}(R)_I)\det(\Gamma(R)_I)\geq 0$ for any set $I$ of $n-k$ distinct indices in $\{1,\ldots,n\}$ and any set  $R=\{y^1\rightarrow y'^{1},\dots,y^k\rightarrow y'^{k}\}$ of $k$ reactions  from $\mmN$  if and only if $-J_c(f_{\kappa})$ (minus the Jacobian) is a $P_0$-matrix \cite[Th.~4.1, Th.~4.3]{Banaji-donnell}.  If $\mmN$ is fully open then $(*)$ (with inflow and outflow reactions \emph{excluded} from the sets $R$) is equivalent to $-J_c(f_{\kappa})$ being a $P$-matrix \cite[Cor.~4.2, Th.~4.4]{Banaji-donnell}. Using the results of  \cite{Gale:1965p474}, it follows that  $\mmN$ is injective. 

If $\mmN$ is closed and injective then $J_c(f_{\kappa})$ is not a $P$-matrix as the rank is less than $n$. Our criterion for injectivity  states that if there exists $\sigma\in\{0,1\}$ such that $(**)$ $(-1)^{\sigma}\det(\mathcal{Y}(R)_I)\det(\Gamma(R)_I)\geq 0$   for any set $I$ of $n-s$ distinct indices in $\{1,\ldots,n\}$ and any set  $R=\{y^1\rightarrow y'^{1},\dots,y^s\rightarrow y'^{s}\}$ of $s$ reactions  from $\mmN$, and at least one of them is non-zero, then $\det(J_c(\widetilde{f}_{\kappa}))$ is non-zero and $\mmN$ is injective. 
Assume that $(**)$ holds. If all species in $\mmN$ appear in some reactant complex then the NAC assumption guarantees that the diagonal entries of $J_c(\widetilde{f}_{\kappa})$ in rows $d+1,\dots,n$ are non-zero and negative. Since the diagonal entries in rows $1,\dots,d$ are all equal to one, it follows that the product of the diagonal entries is a term in the expansion of $\det(J_c(\widetilde{f}_{\kappa}))$ and has sign $(-1)^s$. Consider the matrix  $J^{*}_c(\widetilde{f}_{\kappa})$ obtained from $J_c(\widetilde{f}_{\kappa})$ by multiplying the lower $s$ rows by minus one. The matrix has full rank and is a $P$-matrix:
if $\mmN$ fulfills $(**)$ then all non-zero terms in the determinant expansion of $J^{*}_c(\widetilde{f}_{\kappa})$ have sign $(-1)^{s}$. Each principle minor can be obtained as a sum of terms in the determinant expansion of $J^{*}_c(\widetilde{f}_{\kappa})$  divided by the (positive) diagonal entries of the rows not taken in the minor. Since this sum contains at least the non-zero diagonal product term, all principle minors are positive.  
Consequently, $J^{*}_c(\widetilde{f}_{\kappa})$ is a $P$-matrix.  Note that if $\mmN$ has full rank (for example if it is fully open) then $J^{*}_c(\widetilde{f}_{\kappa})=-J_c(\widetilde{f}_{\kappa})$.

If $S_i$ is a species that is only in product complexes of $\mmN$ then the lower $s$ rows of $J_c(\widetilde{f}_{\kappa})$ are zero in the $i$-th position. If $S_i$ is not involved in any conservation law, then the $i$-th column of $J_c(\widetilde{f}_{\kappa})$ is zero, $\det(J_c(\widetilde{f}_{\kappa}))= 0$, and $(**)$ does not hold. Generally, let 
$S_{i_1},\dots,S_{i_m}$ be the species of $\mmN$ that are only in reactant complexes such that  the lower $s$ rows of $J_c(\widetilde{f}_{\kappa})$ are zero in entries $i_1,\dots,i_m$.
If $(**)$ holds and $\det(J_c(\widetilde{f}_{\kappa}))\neq 0$ then the columns $i_1,\dots,i_m$ of $J_c(\widetilde{f}_{\kappa})$ are linearly independent.
   Since the lower $s\times m$ submatrix is identically zero, it follows that the upper $d\times m$ matrix has rank $m$ (and in particular $m\leq d$). As a consequence, we can reorder the species in $\mmS$ such that $S_{i_1},\dots,S_{i_m}$ are the first $m$ species and we are guaranteed that there exists a reduced basis of $\Gamma^{\perp}$ with that order. Because $m\leq d$, with this order the diagonal entries of $J_c(\widetilde{f}_{\kappa})$ are non-zero and we can proceed as above.

To sum up, if $(**)$ holds, then there exists an order of the species of $\mmN$ such that the matrix obtained from $J_c(\widetilde{f}_{\kappa})$  by changing the sign of the last $s$ rows is a $P$-matrix.  Using the results of \cite{Gale:1965p474} we  conclude  that $\widetilde{f}_{\kappa}$ is an injective function and hence that  $\mmN$ is injective.

\section{Concluding remarks}
\begin{figure}
\begin{center}
\begin{tikzpicture}[scale=1.3]
\node[draw=yellow!70!green,diamond,aspect=2.5, rounded corners,line width=1pt,nearly transparent,fill=yellow!70!green] (t0) at (2,4) {$\mmN$ network};
\node[draw=orange,diamond,aspect=2.5, rounded corners,line width=1.5pt] (t0) at (2,4) {$\mmN$ network};
\node[draw=yellow!70!green,diamond,aspect=2.5, rounded corners,line width=1pt,nearly transparent,fill=yellow!70!green] (t3) at (6,0.5) {$\mmN$ injective};
\node[draw=orange,diamond,aspect=2.5, rounded corners,line width=1.5pt] (t3) at (6,0.5) {$\mmN$ injective};
\node[draw=yellow!70!green,diamond,aspect=2.5, rounded corners,line width=1pt,nearly transparent,fill=yellow!70!green] (t1)  at (-2,1.2) {$\mmN^o$ injective};
\node[draw=yellow!70!green,diamond,aspect=2.5, rounded corners,line width=1pt] (t1)  at (-2,1.2) {$\mmN^o$ injective};

\draw[->,gray,dashed,line width=2pt] (t0) .. controls +(-3.5,0) and +(0,1) .. (t1);
\node[fill=white,text width=3cm,text badly centered,draw=magenta,rectangle, rounded corners,line width=1pt] (E1) at (-1.4,3.2)  {Pass Jacobian criterion for fully open networks?};
\node[rectangle,draw=blue,rounded corners,fill=white] (c1) at (-2,2.3) {YES};

\draw[->,gray,line width=2pt] (t0) .. controls +(3.5,0) and +(0,1) .. (t3);
\node[fill=white,text width=2.8cm,text badly centered,draw=magenta,rectangle, rounded corners,line width=1pt] (E1) at (5.2,3.2)  {Pass Jacobian criterion for any network?};
\node[rectangle,draw=blue,rounded corners,fill=white] (c1) at (5.8,2.1) {YES};

\node[text width=2.75cm,draw=yellow!70!green,rectangle,text badly centered, rounded corners,line width=1pt,nearly transparent,fill=yellow!70!green] (t2)  at (-1,-2) {$\mmN$ cannot have multiple  non-degenerate  steady states};
\node[text width=2.75cm,draw=yellow!70!green,rectangle,text badly centered, rounded corners,line width=1pt] (t2)  at (-1,-2) {$\mmN$ cannot have multiple  non-degenerate  steady states};

\node[text width=3cm,draw=yellow!70!green,rectangle,text badly centered, rounded corners,line width=1pt,nearly transparent,fill=yellow!70!green] (t4)  at (3,-0.5) {$\mmN$ does not have degenerate steady states};
\node[text width=3cm,draw=yellow!70!green,rectangle,text badly centered, rounded corners,line width=1pt] (t4)  at (3,-0.5) {$\mmN$ does not have degenerate steady states};

\node[text width=2.7cm,text badly centered,draw=yellow!70!green,rectangle, rounded corners,line width=1pt,inner sep=5pt,nearly transparent,fill=yellow!70!green] (t5)  at (6,-2) {$\mmN$ cannot have multiple nor degenerate steady states};
\node[text width=2.7cm,text badly centered,draw=orange,rectangle, rounded corners,line width=1.5pt,inner sep=5pt] (t5)  at (6,-2) {$\mmN$ cannot have multiple nor degenerate steady states};

\draw[->,gray,line width=2pt] (t1) .. controls +(1,0.8) and +(-1,1.8) .. (t3);
\node[fill=white,text width=3.8cm,text badly centered,draw=magenta,rectangle, rounded corners,line width=1pt] (E2) at (1.8,1.8)  {Determinant of the Jacobian has one monomial of total degree s?};
\node[rectangle,draw=blue,rounded corners,fill=white] (c1) at (4.7,1.6) {YES};

\draw[->,gray,line width=2pt,dashed] (t1) .. controls +(0,-1.5) and +(-2,0) .. (t4);
\node[fill=white,text width=1.5cm,text badly centered,draw=magenta,rectangle, rounded corners,line width=1pt] (E1) at (-0.8,-0.2)  {Normal projection?};
\node[rectangle,draw=blue,rounded corners,fill=white] (c1) at (0.7,-0.4) {YES};

\draw[->,gray,line width=2pt] (E2) .. controls +(0,-1) and +(0,1) .. (t4);
\node[rectangle,draw=blue,rounded corners,fill=white] (c1) at (2.5,0.6) {YES};

\draw[->,gray,line width=2pt,dashed] (t1) .. controls +(-1,-1.5) and +(0,1) .. (t2);
\draw[->,gray,line width=2pt,dashed] (t2) -- (t5);
\draw[gray,line width=2pt,dashed] (t4) .. controls +(-2,-1.5) and +(0,0.1) .. (3,-2);
\draw[fill=gray,draw=gray] (3,-2) circle (2pt); 

\draw[->,gray,line width=2pt] (t3) -- (t5);
\draw[rounded corners, gray] (-3,-3.5) rectangle (3.3,-2.9);
\draw[-,gray,dashed,line width=2pt] (-2.1,-3.2) -- (-2.8,-3.2);
\node[anchor=west]  (cf) at (-2,-3.2) {Craciun, Feinberg};
\draw[-,gray,line width=2pt] (1,-3.2) -- (1.7,-3.2);
\node[anchor=west]  (cf) at (1.8,-3.2) {This work};

\end{tikzpicture}
\end{center}
\caption{This work in relation to previous work of Craciun and Feinberg.}\label{summary}
\end{figure}
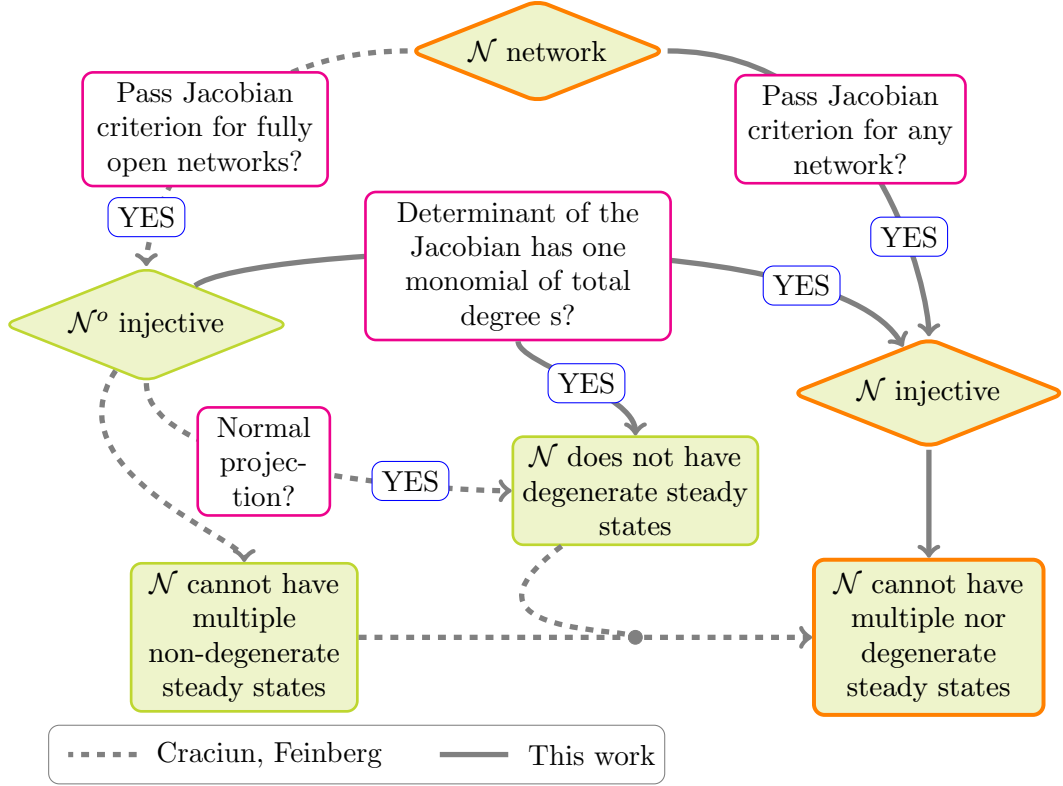

In this paper  we have provided a Jacobian criterion for the characterization of injective networks taken with mass-action kinetics. Injective networks have the important property that  
multiple positive steady states within any stoichiometry class cannot occur for any choice of rate constants. Further, the existence of  multiple boundary solutions of a certain type is also precluded. Importantly, if an injective network has a positive steady state, then it cannot have any other non-negative steady state.

Since injective networks are characterized by a non-singular Jacobian of the  species formation rate function (when restricted to the stoichiometric space), other interesting properties of this class of networks are expected. For instance, in \cite{shinar-flux} it is shown that their steady-state fluxes exhibit a certain degree of robustness  against variation in rate constants.

The main novelty of this work is that injectivity of a network can be assessed directly  avoiding the detour to fully open networks. In Figure \ref{summary} we show how our work relates to previous work on injectivity by Craciun and Feinberg in \cite{craciun-feinbergI,craciun-feinberg,craciun-feinberg-semiopen}.  

The Jacobian criterion presented here can be effectively implemented using any symbolic computation software like Mathematica. Suggested steps for its implementation are the following (using pseudo-Mathematica commands):

\begin{itemize}
\item Definitions:
\begin{quote}
\textbf{n}: number of species, \textbf{A}: stoichiometric matrix $\Gamma$,  \\
\textbf{c=c[1],...,c[n]}: Concentration vector, 
\textbf{v}: rate vector (with concentrations) $(k_{y\rightarrow y'} c^y)_{y\rightarrow y'\in\mmR}$, \\
 \textbf{F=A.v}: species formation rate function $f_{\kappa}$.
 \end{quote}
 \item Conservation laws and  associated extended rate function:
\begin{quote}
 \textbf{P=RowReduce[NullSpace[Transpose[A]]]}: find a reduced basis of $\Gamma^{\perp}$,  \\
 \textbf{ind}: vector of the indices of the first entry of each vector in \textbf{P}, \\
  \textbf{Ftilde}: remove entries \textbf{ind} of \textbf{F} and add the  entries \textbf{P.c}.
 \end{quote}
 \item Compute the determinant of the Jacobian of \textbf{Ftilde}:
 \begin{quote}
 \textbf{J=Table[\ D[\ Ftilde[[i]],\ c[j]\ ],\ \{i,1,n\},\{j,1,n\}]}, \\
 \textbf{D=Det[J]}.
  \end{quote}
 \item Determine the signs of the coefficients of \textbf{D}:
 \begin{quote}
 \textbf{m = MonomialList[D]}: monomials of \textbf{D}, \\
 \textbf{coeffs=DeleteDuplicates[m/.\{\textbf{k}$\rightarrow$1,\textbf{c}$\rightarrow$1\}]}: coefficient of each of the monomials (in  the rate constants and \textbf{c}), and delete duplicates, \\
 \textbf{Pos = Select[coeffs, Positive], Neg=Select[coeffs, Negative]}: select the positive and the negative coefficients, \\
If \textbf{Length[Pos]$>$0 and Length[Neg]=0}: $\mmN$ is injective, \\ 
If \textbf{Length[Pos]=0 and  Length[Neg]$>$0}: $\mmN$ is injective, \\
Otherwise: $\mmN$ is not injective.
   \end{quote}
\end{itemize}

In our experience, this procedure works fast and reliably for not-so-big networks (at least up to around 15-20 species). For bigger networks, the computational cost in finding the determinant might be too high for a standard computer. In that case, one  might  construct the lists \textbf{Pos} and \textbf{Neg} above  by computing one by one the signs of  $\det(\mmY(R)_{I})\det(\Gamma(R)_{I})$ for all sets $R$ of $s$ reactions and 
indices $I\in  \mathcal{O}_{d}(\mmN)$. If two conflicting (different) signs are found, the algorithm should  stop and the network is not injective. In Mathematica, the command \textbf{Subsets[list,\{s\}]} provides the subsets of a list containing exactly $s$ elements.

\section*{Acknowledgement}
EF is supported by a postdoctoral grant from the ``Ministerio de Educaci\'on'' of Spain and the project  MTM2009-14163-C02-01 from the ``Ministerio de Ciencia e Innovaci\'on''.  CW is supported by the Lundbeck Foundation, Denmark, The Danish Research Councils and the Leverhulme Trust, UK.  This work was done while EF and CW were visiting Imperial College London in fall 2011.


\end{document}